\documentclass[12pt,reqno]{amsart}
\usepackage{xifthen}
\usepackage{tabularx}
\usepackage{adjustbox}
\usepackage{makecell}
\usepackage{bbm}
\newcommand\scalemath[2]{\scalebox{#1}{\mbox{\ensuremath{\displaystyle #2}}}}

\newcommand{\lquotient}[2]{{\left.\raisebox{-.25em}{$#1$}\middle\backslash\raisebox{.25em}{$#2$}\right.}}
\newtheorem*{remark*}{Remark}

\usepackage{subfig}
   \topmargin=0in
   \oddsidemargin=0in
   \evensidemargin=0in
   \textwidth=6.5in
   \textheight=8.5in
\usepackage{pkgfile}

\raggedbottom




\linespread{1.2}

\begin{document}

	\title[Rational points in regular orbits] {Rational points in regular orbits attached to infinitesimal symmetric spaces}

\author{Trung Can}
\address{Department of Mathematics\\
Duke University\\
Durham, NC 27708}
\email{trung.can@duke.edu}

\author{Chung-Ru Lee}
\address{Department of Mathematics\\
Duke University\\
Durham, NC 27708}
\email{chung.ru.lee@duke.edu}

\author{Benjamin Nativi}
\address{Department of Mathematics\\
Duke University\\
Durham, NC 27708}
\email{benjamin.nativi@duke.edu}

\author{Gary Zhou}
\address{Department of Mathematics\\
Duke University\\
Durham, NC 27708}
\email{gaiting.zhou@duke.edu}
    
\subjclass[2010]{Primary 20G05;  Secondary 11F99, 20G15}
\thanks{Research supported by the DOmath program at Duke University.}

\begin{abstract}
Motivated by problems arising in the relative trace formula and arithmetic invariant theory, we prove the existence of rational points on orbits in certain infinitesimal symmetric spaces. As an application, we prove analogous results for orbits in certain global reductive symmetric spaces.
\end{abstract}
\date{July 17, 2018}
\maketitle


\section{Introduction}

Let $F$ be a characteristic zero field with fixed algebraic closure $\overline{F}$ and let $G$ be a connected reductive group over $F$ equipped with an involution (that is, an automorphism of order $2$)
$$
\theta:G \longrightarrow G.
$$
The Lie algebra $\mathfrak{g}$ of $G$ is then equipped with an induced automorphism, which we will denote as (by abuse of notation)
$$
\theta:\mathfrak{g} \longrightarrow \mathfrak{g},
$$
again of order $2$. We let $G(1)\subseteq G$ be the subgroup invariant under $\theta$, and let $\mathfrak{g}(\pm 1)$ be the $\pm 1$-eigenspaces of $\theta$ acting on $\mathfrak{g}$, respectively. Thus the Lie algebra of $G(1)$ is $\mathfrak{g}(1)$, and the adjoint action of $G(1)$ on $\mathfrak{g}$ preserves $\mathfrak{g}(-1)$. One can view $\mathfrak{g}(-1)$ as an \textbf{infinitesimal symmetric space}, since it can be identified with the tangent space of the reductive symmetric space $G/G(1)^{\circ}$ at the basepoint $G(1)^{\circ}$ (here the superscrupt $\circ$ is to denote the neutral component).  In particular the action of $G(1)$ on $\mathfrak{g}(-1)$ can be thought of as an infinitesimal analogue of the action of $G(1)$ on $G/G(1)^{\circ}$ by left multiplication.

The representations 
$$
G(1) \longrightarrow \mathrm{Aut}(\mathfrak{g}(-1)) 
$$
as $G$ and $\theta$ vary, appear in many contexts. The structure of the $G(1)$-orbits on $\mathfrak{g}(-1)$ is of particular interest and will be the main focus of this paper.

An element $X \in \mathfrak{g}(-1)(\overline{F})$ is said to be \textbf{relatively regular} if the $G(1)({\overline{F}})$-orbit of $X$ is of maximal dimension among all $G(1)({\overline{F}})$-orbits.  It is \textbf{regular} if the $G({\overline{F}})$-orbit is of maximal dimension among all $G({\overline{F}})$-orbits. We let 
$$
\mathfrak{g}(-1)^{rr} \subset \mathfrak{g}(-1)
$$
be the subscheme of relatively regular elements (see \S\ref{sec:orbits}). It is dense and $G(1)$-invariant. 

For $X \in \mathfrak{g}(-1)^{rr}(\overline{F})$, let
$$
\mathcal{O}(X)(\overline{F}):=G(1)(\overline{F})\cdot X.
$$
This is the set-theoretic orbit of $X$.  If $\mathcal{O}(X)(\bar{F})$ is invariant under $\mathrm{Gal}\,(\overline{F}/F)$ then it defines a subscheme over $F$,
$$
\mathcal{O} \subset \mathfrak{g}(-1)^{rr}.
$$
We refer to these subschemes as \textbf{relatively regular orbits} in $\mathfrak{g}(-1)$.

The $F$-points $\mathcal{O}(F)$ of $\mathcal{O}$ can then be concretely described as the set
$$
\mathcal{O}(F)=\mathcal{O}(X)(\bar{F}) \cap \mathfrak{g}(-1)^{rr}(F).
$$
Even though $\mathcal{O}(X)(\bar{F})$ contains $X$ and is therefore nonempty, it is false in general that $\mathcal{O}(F)$ is nonempty. This motivates the following question:
\begin{align}
\textit{Under which circumstances does it happen that $\mathcal{O}(F)$ is non-empty?}\label{question:1}
\end{align}

This question is of intrinsic number theoretic interest, and is moreover a problem that occurs often in representation theory and arithmetic invariant theory. For example, if $G$ is a connected reductive group that is quasi-split with simply connected derived group over $F$ then any conjugacy class in $G$ intersects $G(F)$. This is formulated in an important result of Kottwitz \cite{Kottwitz} that completed a work of Steinberg \cite{Steinberg}. The result is crucial for the stabilization of the trace formula \cite{KottwitzShelsted,Labesse}. We expect that the work we have begun in this paper will play an analogous role in the relative trace formula. We refer to \cite{Getz:Wambach} for an example of the type of comparison of relative trace formulae that results like this would apply.

Moreover the entirety of the subject of arithmetic invariant theory in the sense of Bhargava is based on a study of $F$-rational points of certain orbits like those discussed above; the thesis of Thorne is an excellent resource to consult for this point of view \cite{Thorne}.

In this paper, we answer (\ref{question:1}) in the following situation.

Let $p \geq q$ be positive integers.  Define the matrices
\begin{align*}
J_{p,q}=\left(\scalemath{0.7}{\begin{array}{*{10}{c}} J_p & \\ & -J_q\end{array}} \right)\qquad\text{and}\qquad J_{p,q}'=\left(\scalemath{0.7}{\begin{array}{*{10}{c}} J_p' & \\ & J_q'\end{array}} \right),
\end{align*}
where 
\begin{align*}
J_r:=\left(\scalemath{0.7}{\begin{array}{*{10}{c}} & & 1 \\ & \reflectbox{$\ddots$}  & \\ 1 & &\end{array}} \right)\qquad\text{and}\qquad J_r':=\left(\scalemath{0.7}{\begin{array}{*{10}{c}} & & 1 \\ & \reflectbox{$\ddots$}  & \\ (-1)^{r-1} & &\end{array}} \right).
\end{align*}
For any symmetric (resp.~skew-symmetric) matrix $J\in\mathrm{GL}_n(F)$ (resp.~$J'\in\mathrm{GL}_{2n}(F)$) we let
\begin{align*}
O(J)(R)& \defeq\{g \in\mathrm{GL}_n(R): Jg^{-t}J^{-1}=g\}\\
\mathrm{Sp}(J')(R)&\defeq\{g \in\mathrm{GL}_{2n}(R): J'g^{-t}J'^{-1}=g\}
\end{align*}
be the associated orthogonal group and symplectic group. In the orthogonal case we assume in addition that $|p-q| \leq 1$, and in the symplectic case we require that $p$ and $q$ are even.

We let $G$ be one of the groups among $\mathrm{GL}_{p+q}$, $O(J_{p,q})$, or $\mathrm{Sp}(J'_{p,q})$. Note that for any of these groups, the adjoint action by $I_{p,q}:=\left(\scalemath{0.6}{\begin{array}{*{10}{c}} \mathbbm{1}_p & \\ &-\mathbbm{1}_q \end{array}} \right)\in G$ induces an automorphism $\theta$ of $G$,
$$\theta(g)=I_{p,q}gI_{p,q}^{-1}.$$
The $\theta$-invariant subgroup $G(1)$ will be a direct product of two classical groups of same type as $G$.

The main result of this paper is as the following.
\begin{thm} \label{thm:main}
For $G$ and $\theta$ defined above, any $\mathrm{Gal}\,(\overline{F}/F)$-invariant regular $G(1)(\overline{F})$-orbit in $\mathfrak{g}(-1)(\overline{F})$ has an $F$-point.
\end{thm}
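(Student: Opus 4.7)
The plan is to handle the three cases $G \in \{\mathrm{GL}_{p+q},\, O(J_{p,q}),\, \mathrm{Sp}(J'_{p,q})\}$ by a uniform strategy: make the $G(1)$-action on $\mathfrak{g}(-1)$ explicit, identify its categorical quotient, and then for each Galois-invariant regular orbit produce an $F$-rational representative by writing down an explicit Kostant--Rallis-type section over $F$.

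First, a direct block computation shows that elements of $\mathfrak{g}(-1)$ take the form $X = \begin{pmatrix} 0 & B \\ C & 0 \end{pmatrix}$. In the $\mathrm{GL}$ case $(B, C)$ is an arbitrary pair of matrices of sizes $p \times q$ and $q \times p$; in the orthogonal and symplectic cases $C$ is determined by $B$ via a transpose-type relation of the shape $C = \varepsilon\, J_q^{-1} B^{t} J_p$, with the sign depending on the type. In each case $G(1) = H_1 \times H_2$ splits as a product of two classical groups of the same type as $G$, acting on $F^p$ and $F^q$ respectively, via $(h_1, h_2) \cdot (B, C) = (h_1 B h_2^{-1},\, h_2 C h_1^{-1})$. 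The fundamental $G(1)$-invariant is the characteristic polynomial $\chi_X(t) = \det(t I_q - CB) \in F[t]$ of the smaller product $CB$.

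Next, I would study the categorical quotient map $\pi\colon \mathfrak{g}(-1)^{rr} \to \mathbb{A}^q$ sending $X$ to the coefficients of $\chi_X$, and argue that over $\overline F$ the regular fibers of $\pi$ are single $G(1)(\overline F)$-orbits. This is the Kostant--Rallis-type content of the setup for these particular symmetric pairs, and should reduce to an explicit computation of the centralizers of regular elements, which are expected to be tori of a shape controlled by the factorization of $\chi_X$.

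The crux of the argument is then to construct, for each $\chi \in F[t]$ of degree $q$ in the image of $\pi$, an explicit $F$-rational $X_\chi \in \mathfrak{g}(-1)^{rr}(F)$ with $\pi(X_\chi) = \chi$. In the $\mathrm{GL}$ case this is elementary: take $B$ to be the top $p \times q$ block of $I_p$ and $C$ to be the companion matrix of $\chi$ padded with zeros, so that $CB$ is exactly the companion matrix of $\chi$. In the orthogonal and symplectic cases the product $CB = \varepsilon\, J_q^{-1} B^{t} J_p B$ must be arranged to have characteristic polynomial $\chi$ while $B^t J_p B$ is simultaneously forced to be symmetric (resp.\ skew-symmetric). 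I expect this to be the main technical obstacle: writing down such a $B$ requires a persymmetric (resp.\ antipersymmetric) ``companion-like'' construction adapted to the anti-diagonal shape of $J_p$ and $J_q$, and the hypotheses $|p - q| \leq 1$ (orthogonal) and $p, q$ even (symplectic) enter precisely to ensure that such a $B$ can always be exhibited over $F$ for arbitrary $\chi$. Once the section is in hand the conclusion is immediate: the coefficients of $\chi_X$ for a Galois-invariant orbit are themselves Galois-invariant, hence in $F$, and $X_\chi \in \mathfrak{g}(-1)(F)$ lies in the same $G(1)(\overline F)$-orbit as $X$ by the uniqueness of regular fibers, yielding the required $F$-point.
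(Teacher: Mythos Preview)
Your high-level strategy coincides with the paper's: both identify the GIT quotient $G(1)\backslash\mathfrak{g}(-1)$, use that regular fibers are single $G(1)(\overline F)$-orbits (Kostant--Rallis), and then produce an $F$-rational section of the quotient map. The difference is in how the section is obtained, and this is where your proposal has a genuine gap.

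The paper does not build the section pointwise from companion-type matrices. Instead it invokes the Kostant--Rallis theorem in the form: if $(e,f,h)$ is an $\mathfrak{sl}_2$-triple with $e,f\in\mathfrak{g}(-1)^{rr}(\mathbb{Q})$ and $h\in\mathfrak{g}(1)(\mathbb{Q})$, then the affine subspace $f+\mathfrak{g}(-1)^e$ is already a section, automatically defined over $\mathbb{Q}$. By Jacobson--Morozov and a short $\theta$-eigenspace argument, this reduces the entire theorem to exhibiting a single relatively regular nilpotent $e\in\mathfrak{g}(-1)(\mathbb{Q})$, and the paper then writes one down in each case and checks $\dim\mathfrak{g}(-1)^e=\mathrm{rank}\,\theta$. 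Your plan, by contrast, leaves precisely this construction undone: you say yourself that producing a $B$ with $B^tJ_pB$ of the right symmetry type and prescribed characteristic polynomial is ``the main technical obstacle,'' and you do not carry it out. That is the missing idea---the $\mathfrak{sl}_2$-triple packaging is what makes the section come for free and turns the problem into a finite computation.

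There is also a concrete inaccuracy: in the symplectic case the rank of $\theta$ is $q/2$, not $q$, so the categorical quotient is $\mathbb{A}^{q/2}$ and the invariants are not the full list of coefficients of $\det(tI_q-CB)$. Your map $\pi$ to $\mathbb{A}^q$ would not have single regular orbits as fibers there, so the final step (``$X_\chi$ lies in the same orbit as $X$'') would fail as written. This is fixable, but it signals that the direct companion-matrix route requires more case-specific care than your sketch suggests, whereas the paper's $\mathfrak{sl}_2$ approach handles all three cases uniformly once the nilpotent is exhibited.
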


Previous studies include the case of $\mathrm{GL}_{p+q}$, where the result was obtained by Jacquet and Rallis in \cite{J-R}. In $\mathrm{O}_{p+q}$ it was obtained by J.~Thorne when $\abs{p-q}\leq 1$ \cite{Thorne}.

Our proof is essentially uniform in each of the above cases, and one expects that it can be generalized to a broader setting. The ultimate goal would be to prove analogues theorems to Kottwitz's Theorem for $\scalemath{0.8}{\lquotient{G(1)}{\mathfrak{g}(-1)}}$ under the setting of a general class in reductive groups. As a corollary of our Main Theorem, we prove consequential results for the orbits of certain classical groups on $M_{p,q}$, the space of $p \times q$ matrices, which is stated as Corollary \ref{cor:main} and will be further explained in \S\ref{sec:cor}.

For each $G$ above there is a natural action of $G(1)$ on $\mathrm{M}_{p,q}$, written explicitly by
\begin{align*}
G(1)(R) \times \mathrm{M}_{p,q}(R)  & \rightarrow \mathrm{M}_{p,q}(R)\\
(\left(\scalemath{0.7}{\begin{array}{*{10}{c}} g_1 & \\ & g_2 \end{array}} \right), X) & \mapsto g_1Xg_2^{-1}.
\end{align*}
We say an orbit in $\mathrm{M}_{p,q}$ is \textbf{$G(1)$-regular} if it is of maximal dimension among all $G(1)$-orbits.  

In Chapter 5, as a Corollary of the Main Theorem, we will prove the following.
\begin{cor} \label{cor:main}
For $G$ and $\theta$ defined above, any $\mathrm{Gal}(\overline{F}/F)$-invariant regular $G(1)$-orbit $\mathcal{O}$ in $\mathrm{M}_{p,q}$ has an $F$-point.
\end{cor}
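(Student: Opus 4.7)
The plan is to reduce Corollary \ref{cor:main} to Theorem \ref{thm:main} by establishing, in each of the three cases, a natural $G(1)$-equivariant relationship between $M_{p,q}$ and the infinitesimal symmetric space $\mathfrak{g}(-1)$. A direct block-matrix computation shows that the $-1$-eigenspace of $\theta$ on $\mathfrak{g}$ consists of matrices that are block off-diagonal with respect to the $(p,q)$-decomposition, parametrized by a pair $(B,C) \in M_{p,q} \times M_{q,p}$. In the orthogonal case, the defining equation $X^tJ_{p,q}+J_{p,q}X=0$ of $\mathfrak{g}$ forces $C=J_qB^tJ_p$; an analogous skew-symmetric identity arises in the symplectic case. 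In both cases this yields a $G(1)$-equivariant isomorphism $M_{p,q} \xrightarrow{\sim} \mathfrak{g}(-1)$ defined over $F$, intertwining the action $(g_1,g_2)\cdot B=g_1Bg_2^{-1}$ with the adjoint action. In the general linear case no such constraint arises, and one has instead a direct sum decomposition $\mathfrak{g}(-1)=M_{p,q}\oplus M_{q,p}$ together with a $G(1)$-equivariant projection onto the first summand.

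Using these identifications, a $\mathrm{Gal}(\overline{F}/F)$-invariant $G(1)$-regular orbit $\mathcal{O}\subset M_{p,q}$ pulls back (in the orthogonal and symplectic cases) to a Galois-invariant $G(1)$-orbit in $\mathfrak{g}(-1)$ of maximal $G(1)$-orbit dimension. I would then check, by a generic stabilizer computation specific to each classical pair, that such an orbit is also regular in the sense of Theorem \ref{thm:main}---that is, a $G(1)$-orbit whose elements lie in $G$-orbits of maximal dimension in $\mathfrak{g}$. Granted this, Theorem \ref{thm:main} delivers an $F$-point of the corresponding orbit in $\mathfrak{g}(-1)$, which transports back via the isomorphism to the desired $F$-point of $\mathcal{O}$. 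In the general linear case one takes advantage of the fact that, for $p\geq q$, the unique $G(1)$-regular orbit on $M_{p,q}$ is the locus of rank-$q$ matrices, which already contains an obvious $F$-rational point; alternatively, one lifts a representative $X \in \mathcal{O}(\overline{F})$ to $\mathfrak{g}(-1)$ by a Galois-equivariant choice of complement $C\in M_{q,p}$, applies Theorem \ref{thm:main}, and projects back along $\mathfrak{g}(-1)\to M_{p,q}$.

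The step I expect to be most delicate is the matching of regularity notions in the orthogonal and symplectic cases: under the isomorphism $M_{p,q}\cong\mathfrak{g}(-1)$ one must verify that $G(1)$-regular elements of $M_{p,q}$ correspond to regular elements of $\mathfrak{g}(-1)$ in the sense required to apply Theorem \ref{thm:main}. This reduces to a case-by-case dimension count comparing the generic stabilizer of the $G$-action on $\mathfrak{g}$ with that of the $G(1)$-action on $\mathfrak{g}(-1)$, exploiting the eigenspace decomposition $\mathfrak{g}=\mathfrak{g}(1)\oplus\mathfrak{g}(-1)$ and its compatibility with the adjoint action. A secondary technical point is ensuring that the lift constructed in the general linear case respects the Galois action; this can be arranged either by making a canonical choice of complement or by working with the whole fiber of the projection over $\mathcal{O}$.
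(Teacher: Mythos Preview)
Your approach is essentially the paper's: a $G(1)$-equivariant linear isomorphism $\mathfrak{g}(-1)\xrightarrow{\sim} M_{p,q}$ in the orthogonal and symplectic cases, and the trivial rank argument in the general linear case.

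The one place you diverge is the step you flag as ``most delicate'': matching $G(1)$-regularity on $M_{p,q}$ with regularity in the sense of Theorem~\ref{thm:main}. In the paper this step is immediate, not delicate. The phrase ``regular $G(1)(\overline{F})$-orbit'' in Theorem~\ref{thm:main} means \emph{relatively regular}, i.e.\ of maximal dimension among $G(1)$-orbits in $\mathfrak{g}(-1)$; this is the notion carried through Corollary~\ref{cor:KR} and Lemma~\ref{lem:enough}, which concern $\mathfrak{g}(-1)^{rr}$. It does \emph{not} mean that the ambient $G$-orbit in $\mathfrak{g}$ is of maximal dimension. Consequently, a $G(1)$-equivariant isomorphism automatically takes orbits of maximal $G(1)$-dimension to orbits of maximal $G(1)$-dimension, and no generic-stabilizer comparison between the $G$-action and the $G(1)$-action is needed. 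The paper's proof of the corollary is accordingly two lines after establishing the isomorphism.

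Your alternative treatment of the $\mathrm{GL}$ case via a Galois-equivariant lift is unnecessary for the same reason (and would in any event require care, since $\mathfrak{g}(-1)\cong M_{p,q}\oplus M_{q,p}$ is not isomorphic to $M_{p,q}$ as a $G(1)$-space); the rank argument you mention first is exactly what the paper does.
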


Let us outline the contents of this paper. In Section \ref{section 2}, we recall the notions of an $\mathfrak{sl}_2$-triple, and a Kostant-Weierstrass section. These concepts are used to reduce the proof of Theorem \ref{thm:main} to exhibiting the existence of relatively regular nilpotent elements in $\mathfrak{g}(-1)(F)$. This part of the argument does not rely on the fact that $G$ is one of the three families of groups isolated above, but is also applicable in other settings. In \S \ref{sec:orbits} we record the dimension of the regular orbits, relating it to the rank of it corresponding locally symmetric space. In \S \ref{sec:comp} we use the result from \S\ref{sec:orbits} to exhibit the existence of a relatively regular nilpotent elements in $\mathfrak{g}(-1)(F)$. As an application of our Main Theorem, in \S \ref{sec:cor} we prove Corollary \ref{cor:main}.


\section*{Acknowledgements}

The authors would like to thank Prof. J. Getz for suggesting this project, help with editing and oversight throughout the project. The authors also thank Prof. H. Hahn and Prof. L Ng for providing this research opportunity through DOmath program in Summer of 2017. We would also want to thank Prof. Y. Sakellaridis for his insightful comment toward the subject.

\section{The $\mathfrak{sl}_2$-triples}\label{section 2}

In this section we apply the work of Kostant and Rallis \cite{K-R} to reduce the proof of Theorem \ref{thm:main} to exhibiting the existence of a relatively regular nilpotent element in $\mathfrak{g}(-1)(F)$.
To make this precise let us recall the notion of an $\mathfrak{sl}_2$-triple.
\begin{defn}
An $\mathfrak{sl}_2$-triple in a Lie algebra $\mathfrak{g}$ is a triple $(e,f,h)$ of non-zero elements in $\mathfrak{g}$ satisfying
$$[h,e]=2e\qquad[h,f]=-2f\qquad[e,f]=h.$$
\end{defn}
\indent In particular, if $(e,f,h)$ is an $\mathfrak{sl}_2$-triple 
then its $F$-span is naturally isomorphic to $\mathfrak{sl}_2$ as a Lie algebra; this explains the terminology.  

\begin{remark}
Any quotient in this paper is assumed to be a \textbf{GIT-quotient} without further specification.
\end{remark}

For the moment, we will assume $F=\mathbb{Q}$. Suppose we had fixed an involution $\theta$ on $\mathfrak{g}$ and defined $G(1)$ and $\mathfrak{g}(-1)$ accordingly. Let $\mathfrak{g}(-1)^e$ denote the set of centralizing elements for $e$ in $\mathfrak{g}(-1)$.
\begin{thm} \label{thm:KR} Assume the same notion as in the introduction.

Suppose that $(e,f,h)$ is an $\mathfrak{sl}_2$-triple in $\mathfrak{g}$ so that $e,f \in \mathfrak{g}(-1)^{rr}(\mathbb{Q})$ and $h \in \mathfrak{g}(1)(\mathbb{Q})$.  Then there exists a map
\begin{align*}
f+\mathfrak{g}(-1)^e \longrightarrow \lquotient{G(1)}{\mathfrak{g}(-1)}
\end{align*}
that is an isomorphism of schemes over $\mathbb{Q}$.
\end{thm}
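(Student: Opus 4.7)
The plan is to realize $f + \mathfrak{g}(-1)^e$ as a Kostant-type slice for the symmetric pair $(G, G(1))$, following the framework of Kostant and Rallis \cite{K-R}. First I would verify the morphism is defined over $\mathbb{Q}$: since $\mathfrak{g}(-1)^e = \ker(\mathrm{ad}(e)|_{\mathfrak{g}(-1)})$ is a $\mathbb{Q}$-linear subspace of $\mathfrak{g}(-1)$, the affine space $f + \mathfrak{g}(-1)^e$ is a closed $\mathbb{Q}$-subscheme of $\mathfrak{g}(-1)$, and composition with the GIT quotient $\mathfrak{g}(-1) \to \lquotient{G(1)}{\mathfrak{g}(-1)}$ yields a $\mathbb{Q}$-morphism.

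The heart of the argument is a transversality decomposition coming from $\mathfrak{sl}_2$-representation theory. Since $\theta(e) = -e$, $\theta(f) = -f$, and $\theta(h) = h$, both $\mathfrak{g}(1)$ and $\mathfrak{g}(-1)$ are $\mathfrak{sl}_2$-submodules of $\mathfrak{g}$ under the triple $(e, f, h)$. Applying the standard Jacobson-Morozov analysis of $\mathrm{ad}(f) : \mathfrak{g}(1) \to \mathfrak{g}(-1)$ and $\mathrm{ad}(e)|_{\mathfrak{g}(-1)}$ to the weight-space decompositions then yields
\begin{equation*}
\mathfrak{g}(-1) = [\mathfrak{g}(1), f] \oplus \mathfrak{g}(-1)^e.
\end{equation*}
The first summand is precisely the tangent space at $f$ to the $G(1)$-orbit, so this is the desired transversality. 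Combined with the dimension identity $\dim \mathfrak{g}(-1)^e = \dim \lquotient{G(1)}{\mathfrak{g}(-1)}$, which holds because $e$ is relatively regular, it shows the source and target have equal dimension and that the orbit map $G(1) \times (f + \mathfrak{g}(-1)^e) \to \mathfrak{g}(-1)$ has surjective differential at $(1, f)$, hence dominant image containing $\mathfrak{g}(-1)^{rr}$.

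To complete the proof, I would invoke the Kostant-Rallis slice theorem directly, working first over $\overline{\mathbb{Q}}$: it asserts that on the regular locus, the slice $f + \mathfrak{g}(-1)^e$ meets every relatively regular $G(1)(\overline{\mathbb{Q}})$-orbit in exactly one point, and that the restriction map
\begin{equation*}
\overline{\mathbb{Q}}[\mathfrak{g}(-1)]^{G(1)(\overline{\mathbb{Q}})} \longrightarrow \overline{\mathbb{Q}}[f + \mathfrak{g}(-1)^e]
\end{equation*}
is an isomorphism. Since the morphism and all the defining data $(e,f,h)$ are already defined over $\mathbb{Q}$, faithfully flat (Galois) descent along $\mathbb{Q} \hookrightarrow \overline{\mathbb{Q}}$ then upgrades this to an isomorphism of $\mathbb{Q}$-schemes. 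The main obstacle I anticipate is precisely upgrading the pointwise bijection to a scheme-theoretic isomorphism, i.e., showing surjectivity of the comorphism rather than merely matching of geometric points; this hinges on the Chevalley-type theorem of Kostant-Rallis, which identifies $\mathbb{Q}[\mathfrak{g}(-1)]^{G(1)}$ as a polynomial algebra whose generators restrict freely to a system of coordinates on the slice.
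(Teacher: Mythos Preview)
Your proposal is correct and follows essentially the same strategy as the paper: both arguments define the morphism over $\mathbb{Q}$, invoke faithfully flat descent to reduce to an algebraically closed base, and then cite the Kostant--Rallis slice theorem \cite{K-R} to conclude. The only notable difference is emphasis: the paper spends its effort on explaining why the hypotheses of \cite{K-R} are met---namely, that after base change to $\mathbb{C}$ the involution $\theta$ is the Cartan involution of a real form of type AIII, BDI, or CII, so that Theorems~8, 11, 12, 13 of \cite{K-R} apply verbatim---while you instead sketch the internal transversality argument $\mathfrak{g}(-1)=[\mathfrak{g}(1),f]\oplus\mathfrak{g}(-1)^e$. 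Your extra detail is fine but redundant once you cite K--R; conversely, you should make explicit the real-form identification, since K--R is stated in that language rather than for abstract involutions over $\overline{\mathbb{Q}}$.
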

\begin{remark}
This theorem is mostly due to Kostant and Rallis, but requires minor translation to bring to our setting. This is why we have restricted our attention to the $F=\mathbb{Q}$ situation. However, this is still strong enough to deduce our result for arbitrary $F$ (see Corollary 
\ref{cor:KR} below).
\end{remark}

\begin{proof}
By faithfully flat descent to verify that the given morphism is an isomorphism it suffices to check that 
\begin{align} \label{overC}
f+\mathfrak{g}^e_{\mathbb{C}} \longrightarrow \lquotient{G(1)_{\mathbb{C}}}{\mathfrak{g}(-1)_{\mathbb{C}}}
\end{align}
is an isomorphism.  

For this we note that in each case under consideration $\mathfrak{g}_{\mathbb{C}}$ is a complex Lie algebra admitting a real form $\mathfrak{g}_1$ such that $\theta$, defined as in the introduction, is the associated Cartan involution. More specifically, they correspond to types AIII, BDI and CII in \cite[Table V, p. 518]{Helgason}.  Thus we can apply 
 \cite[Theorems 8, 11, 12, and 13]{K-R} to deduce the theorem.
 \end{proof}

\begin{remark}
Technically, the Theorem in \cite{K-R} is stated for the adjoint group $G^\mathrm{ad}$ instead of $G$. However the adjoint representation
$$G\longrightarrow\mathrm{Aut}(\mathfrak{g})$$
factors through the adjoint group of $\mathfrak{g}$, which is $G^\mathrm{ad}=G/Z(G)$. Thus we have the isomorphism
$$\lquotient{G(1)^\mathrm{ad}_{\mathbb{C}}}{\mathfrak{g}(-1)_{\mathbb{C}}}
\simeq \lquotient{G(1)_{\mathbb{C}}}{\mathfrak{g}(-1)_{\mathbb{C}}}.
$$
\end{remark}

\begin{cor} \label{cor:KR}
Let $F=\mathbb{Q}$ and suppose that there is an $\mathfrak{sl}_2$-triple in $\mathfrak{g}$ with 
$$
e,f \in \mathfrak{g}(-1)^{rr}(\mathbb{Q}) \textrm{ and }h \in \mathfrak{g}(1)(\mathbb{Q}).
$$
Then for any characteristic zero field $k$,
every $G(1)(\bar{k})$-orbit in $\mathfrak{g}(-1)^{rr}(\overline{k})$ that is fixed under $\mathrm{Gal}(\overline{k}/k)$ intersects $\mathfrak{g}(-1)(k)$. 
\end{cor}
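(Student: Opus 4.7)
The plan is to combine Theorem~\ref{thm:KR} with Galois descent on the GIT quotient. The key observation is that Theorem~\ref{thm:KR} produces a scheme-theoretic section of the quotient map defined over $\mathbb{Q}$, and such a $\mathbb{Q}$-rational section persists under any characteristic-zero base change.

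First I would base-change the isomorphism of Theorem~\ref{thm:KR} along $\mathrm{Spec}\,k \to \mathrm{Spec}\,\mathbb{Q}$ to obtain an isomorphism of $k$-schemes
$$
\iota : \bigl(f + \mathfrak{g}(-1)^e\bigr)_k \xrightarrow{\sim} \lquotient{G(1)_k}{\mathfrak{g}(-1)_k}.
$$
Let $q : \mathfrak{g}(-1)_k \to \lquotient{G(1)_k}{\mathfrak{g}(-1)_k}$ denote the GIT quotient map. Given a Galois-invariant orbit $\mathcal{O}(\bar{k}) = G(1)(\bar{k}) \cdot X$ with $X \in \mathfrak{g}(-1)^{rr}(\bar{k})$, the image $q(X)$ depends only on the orbit, since $q$ is constant on $G(1)(\bar{k})$-orbits. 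The Galois-invariance of $\mathcal{O}(\bar{k})$ and of $q$ therefore forces $q(X)$ to be $\mathrm{Gal}(\bar{k}/k)$-fixed, hence to descend to a $k$-point of the quotient. Setting $X_0 := \iota^{-1}(q(X))$ then produces an element of $(f + \mathfrak{g}(-1)^e)(k) \subseteq \mathfrak{g}(-1)(k)$ with $q(X_0) = q(X)$.

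The remaining task, and the main obstacle I anticipate, is to verify that $X_0$ actually lies in the orbit $\mathcal{O}(\bar{k})$ rather than merely in the same fiber of $q$. Since the section $f + \mathfrak{g}(-1)^e$ is contained in $\mathfrak{g}(-1)^{rr}$, the element $X_0$ is itself relatively regular; the content of the obstacle is therefore to show that on the open subscheme $\mathfrak{g}(-1)^{rr}$, each fiber of $q$ is a single $G(1)(\bar{k})$-orbit. This follows from standard GIT: every fiber of $q$ contains a unique closed $G(1)(\bar{k})$-orbit, and relatively regular orbits are of maximal dimension and so are forced to be closed (any smaller-dimensional orbit in the closure would again lie in $\mathfrak{g}(-1)^{rr}$, contradicting maximality). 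Hence $X$ and $X_0$ are $G(1)(\bar{k})$-conjugate, yielding $X_0 \in \mathcal{O}(\bar{k}) \cap \mathfrak{g}(-1)(k)$ and completing the argument.
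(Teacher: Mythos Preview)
Your overall strategy matches the paper's exactly: push the Galois-invariant orbit down to a $k$-point of the GIT quotient, use the Kostant--Rallis section from Theorem~\ref{thm:KR} to lift it back to a $k$-point $X_0 \in f+\mathfrak{g}(-1)^e$, and then argue that $X_0$ lies in the given orbit because both $X$ and $X_0$ are relatively regular and share the same image in the quotient. The paper handles that last step by invoking \cite[Theorem~9]{K-R}, which says precisely that the relatively regular locus in each fibre of $q$ is a single $G(1)(\bar k)$-orbit.

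Your attempt to replace that citation with a direct GIT argument has a genuine gap. The claim ``relatively regular orbits are of maximal dimension and so are forced to be closed'' is false. Take the element $e$ itself: it is relatively regular and nilpotent, and the one-parameter family $t\mapsto t\cdot e$ shows that $0$ lies in the closure of its $G(1)(\bar k)$-orbit, yet $\{0\}$ is a separate (zero-dimensional, non-regular) orbit. Thus the regular nilpotent orbit is not closed, and in fact it and $\{0\}$ sit in the \emph{same} fibre of $q$. Your parenthetical justification, that any smaller-dimensional orbit in the closure ``would again lie in $\mathfrak{g}(-1)^{rr}$,'' is exactly what fails: $\mathfrak{g}(-1)^{rr}$ is open, not closed, so boundary orbits escape it. What is true---and what you need---is that each fibre contains a unique \emph{open} (equivalently, relatively regular) orbit; but that is a nontrivial structural fact about the Kostant--Rallis setting, not a formal consequence of GIT, and it is precisely the content of \cite[Theorem~9]{K-R}. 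Once you cite that result in place of the closedness argument, your proof is complete and coincides with the paper's.
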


\begin{proof} Let $\mathcal{O}(\overline{k})$ be the $G(1)(\overline{k})$-orbit fixed by $\mathrm{Gal}(\overline{k}/k)$.  Its image $Y$ in $\scalemath{0.8}{\lquotient{G(1)}{\mathfrak{g}(-1)}}(\overline{k})$ is in $\scalemath{0.8}{\lquotient{G(1)}{\mathfrak{g}(-1)}}(k)$.

Denote by
$$
q:\mathfrak{g}(-1)(k) \longrightarrow \lquotient{G(1)}{\mathfrak{g}(-1)}(k)
$$
the quotient map. The inverse image $q^{-1}(Y)$ is the $k$-points of the Zariski closure of $\mathcal{O}(\overline{k})$. Regard $e$ and $f$ as elements in $\mathfrak{g}(k)$. We further know that $q^{-1}(Y) \cap \big(f+{\mathfrak{g}(-1)}^e(k)\big)$ is a single point which is contained in $q^{-1}(Y) \cap \mathfrak{g}(-1)^{rr}(k)$.  On the other hand, by \cite[Theorem 9]{K-R}, 
$$
q^{-1}(Y) \cap \mathfrak{g}(-1)^{rr}(k)=\mathcal{O}(\overline{k}) \cap \mathfrak{g}(-1)^{rr}(k).
$$
\end{proof}

In light of the corollary, to prove our main result it suffices to exhibit an $\mathfrak{sl}_2$-triple $(e,f,h)$ as in the assumptions of Corollary \ref{cor:main}.

\begin{lem} \label{lem:enough} Let $e \in \mathfrak{g}(-1)(F)$ be a relatively regular nilpotent element.  Then there exists an $\mathfrak{sl}_2$-triple $(e,f,h)$ with $f \in \mathfrak{g}(-1)^{rr}(F)$ and $h \in \mathfrak{g}(1)(F)$.
\end{lem}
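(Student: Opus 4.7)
The statement is a $\theta$-equivariant Jacobson--Morozov theorem over $F$. Over an algebraically closed field of characteristic zero, it is due to Kostant and Rallis \cite{K-R}, who show that any nilpotent $e \in \mathfrak{g}(-1)(\overline{F})$ extends to an $\mathfrak{sl}_2$-triple $(e,f,h)$ with $f \in \mathfrak{g}(-1)(\overline{F})$ and $h \in \mathfrak{g}(1)(\overline{F})$. My plan is to descend this existence from $\overline{F}$ to $F$ by a Galois cohomology argument, and then to verify that the $f$ so produced is automatically relatively regular.

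For the descent, consider the affine $F$-scheme $\mathcal{T}$ parametrizing pairs $(f,h) \in \mathfrak{g}(-1) \times \mathfrak{g}(1)$ satisfying $[h,e]=2e$, $[h,f]=-2f$, $[e,f]=h$. By Kostant--Rallis, $\mathcal{T}(\overline{F}) \neq \emptyset$. Let $U_e \subseteq Z_G(e)^{\circ}$ be the unipotent radical; Kostant's rigidity theorem says $U_e$ acts simply transitively on the variety of $\mathfrak{sl}_2$-triples completing $e$. A short uniqueness argument then shows that the conjugating element $u$ between two $\theta$-equivariant completions must satisfy $\theta(u)=u$, so the subgroup $U := U_e^{\theta}$ acts simply transitively on $\mathcal{T}$, exhibiting $\mathcal{T}$ as a torsor under $U$. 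As the $\theta$-fixed subgroup of a connected unipotent group in characteristic zero, $U$ is unipotent, and a d\'evissage through its lower central series reduces $H^{1}(F,U)$ to Hilbert~90 for $\mathbb{G}_a$. Hence the torsor is $F$-trivial and $\mathcal{T}(F) \neq \emptyset$.

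To see that the resulting $f$ is relatively regular, integrate the triple to a homomorphism $\mathrm{SL}_2 \to G$ (using that $\mathrm{SL}_2$ is simply connected) and let $w \in \mathrm{SL}_2$ denote the standard Weyl element, so that $\mathrm{Ad}(w)$ carries $(e,f,h)$ to $(-f,-e,-h)$, inducing an isomorphism $\phi : Z_G(e) \to Z_G(f)$ of algebraic groups. A direct calculation gives $\theta(w) = w^{-1}$ (from $\theta(e) = -e$, $\theta(f) = -f$) and $\mathrm{Ad}(w^{2}) = \mathrm{id}$ (since $w^{2} = -I$ is central). For $g \in Z_G(e)$ one then finds $\theta(\phi(g)) = w^{-1}\theta(g)w = w\theta(g)w^{-1} = \phi(\theta(g))$, the middle equality using that $w^{2}$ is central. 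Thus $\phi$ intertwines the $\theta$-actions and restricts to an isomorphism $Z_{G(1)}(e) \cong Z_{G(1)}(f)$. In particular the two centralizers have equal dimension, so relative regularity of $e$ forces that of $f$.

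The main obstacle I anticipate is the torsor identification in the descent step: one must pin down the correct structure group as $U_e^{\theta}$ and not as the full $\theta$-fixed centralizer $Z_{G(1)}(e)$, which can contain a reductive part with non-trivial Galois cohomology. Once this is done, the dimension count for $f$ via the Weyl element is essentially formal.
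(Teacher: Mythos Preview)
Your descent argument for producing the $F$-rational triple is correct and takes a genuinely different route from the paper. The paper works directly over $F$: it applies Jacobson--Morozov to obtain \emph{some} triple $(e,f',h')$ in $\mathfrak{g}(F)$, decomposes $h'=h_1+h_{-1}$ into $\theta$-eigencomponents, checks that $[h_1,e]=2e$ with $h_1\in\mathrm{im}(\mathrm{ad}_e)$, and then invokes Kostant's completion theorem \cite[Corollary~3.5]{Kostant} to obtain a \emph{unique} $f$ with $(e,f,h_1)$ an $\mathfrak{sl}_2$-triple; uniqueness then forces $f\in\mathfrak{g}(-1)(F)$. Your torsor/$H^1$ argument is more conceptual and packages the same uniqueness statement cohomologically; the paper's argument is more elementary and avoids any appeal to Galois cohomology.

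Your argument for the relative regularity of $f$, however, has a genuine gap. You assert $\mathrm{Ad}(w^{2})=\mathrm{id}$ ``since $w^{2}=-I$ is central,'' but this conflates centrality in $\mathrm{SL}_2$ with centrality in $G$. The image of $-I\in\mathrm{SL}_2$ under the integrated map is $\exp(i\pi h)\in G(\overline{F})$, which acts on the $h$-weight-$n$ subspace of $\mathfrak{g}$ by $(-1)^{n}$; this is \emph{not} the identity whenever odd weights occur in the $\mathfrak{sl}_2$-decomposition of $\mathfrak{g}$. Consequently the step $w^{-1}\theta(g)w=w\theta(g)w^{-1}$ fails for general $g\in Z_G(e)$, and your $\phi$ need not intertwine the $\theta$-actions. (What \emph{is} true is that $w^{4}=1$, so $\theta(w^{2})=w^{-2}=w^{2}$ and $\mathrm{Ad}(w^{2})$ commutes with $\theta$; but it is a nontrivial involution, not the identity.) The conclusion that $f$ is relatively regular is nonetheless correct---the paper, following \cite[Lemma~2.15]{Thorne}, simply asserts it---and can be extracted from the Kostant--Rallis theory; but your Weyl-element computation does not establish it as written.
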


\begin{proof}  The proof of \cite[Lemma 2.15]{Thorne} goes through without change in our context, but we fill in some details for the convenience of the reader.  
Let $e \in \mathfrak{g}(-1)(F)$ be a relatively regular element.  By the Jacobson-Morosov theorem \cite[Theorem 3]{Jacobson} it is an element of an $\mathfrak{sl}_2$-triple $(e,f',h') \in \mathfrak{g}(F)$.  Decompose $h'=h_{1}+h_{-1}$ and $f'=f_1+f_{-1}$ into eigenvectors under $\theta$ (with $h_i,f_i \in \mathfrak{g}(i)(F)$). Then 
$$
2e=[h',e]=[h_1,e]+[h_{-1},e]
$$
Since $[h_{-1},e] \in \mathfrak{g}(1)(F)$ and $[h_1,e] \in \mathfrak{g}(-1)(F)$ we deduce that $[h_{-1},e]=0$ and $[h_1,e]=2e$.    Because $[e,f']=h'$ we have $[e,f_1]=h_{-1}$ and $[e,f_{-1}]=h_1$.  Thus $h:=h_1$ is in the image of $\mathrm{ad}_e$ and $[h,e]=2e$.  
This implies that the pair $(e,h)$ can be completed to an $\mathfrak{sl}_2$ triple $(e,f,h)$ with $f \in \mathfrak{g}(F)$ by \cite[Corollary 3.5]{Kostant}; moreover, $f$ is uniquely determined.
Since $h \in \mathfrak{g}(1)(F)$ and $e \in \mathfrak{g}(-1)(F)$ if we let  $f'$ be the component of $f$ in $\mathfrak{g}(-1)(F)$ then $(e,f',h)$ is an $\mathfrak{sl}_2$-triple, so by the uniqueness result mentioned earlier we have $f'=f$.  Since $e$ is relatively regular, it follows that $f$ is as well, and this completes the proof.
\end{proof}
\begin{remark*}
Strictly speaking, Kostant assumes that $\mathfrak{g}$ is semisimple and $F=\mathbb{C}$ in the paper, but this is not necessary for the argument to be valid. 
\end{remark*}

In view of the lemma and Corollary \ref{cor:KR} to prove Theorem \ref{thm:main} it suffices to show that for every $G$ and $\theta$ as in the statement of that theorem there exists a relative regular $e \in \mathfrak{g}(-1)(\mathbb{Q})$.  The latter sections of this paper construct this element $e$.

\section{Dimensions of regular orbits}\label{sec:orbits}
In this section we compute the dimension of a relatively regular orbit in the cases under consideration.  

A Cartan subspace $\mathfrak{t}(-1)\subseteq\mathfrak{g}(-1)$ is a maximal commutative subspace which consists of semisimple elements. Its dimension is referred to as the \textbf{rank} of $\theta$.

\begin{lem} \label{lem:independence} If $\mathfrak{t}(-1) \subseteq \mathfrak{g}(-1)$ is a Cartan subspace then so is $\mathfrak{t}(-1)_E \subseteq \mathfrak{g}(-1)_E$ for every field extension $E/F$.
\end{lem}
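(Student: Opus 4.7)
The plan is to verify the three defining properties of a Cartan subspace separately for $\mathfrak{t}(-1)_E$. Commutativity is automatic since the Lie bracket on $\mathfrak{g}$ is $F$-bilinear and extends linearly to $E$. Semisimplicity also passes through base change: an element $x \in \mathfrak{g}(-1)$ is semisimple precisely when $\mathrm{ad}(x)$ is diagonalizable over $\overline{F}$, equivalently when its minimal polynomial is squarefree; both formulations are preserved under extension of scalars from $F$ to $E$ (and then to $\overline{E}$). So every element of $\mathfrak{t}(-1)_E$ remains semisimple in $\mathfrak{g}(-1)_E$.

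The substance is the maximality condition. The plan is to deduce it from the self-centralizing property of a Cartan subspace, namely
\[
\mathfrak{z}_{\mathfrak{g}(-1)}(\mathfrak{t}(-1)) := \{x \in \mathfrak{g}(-1) : [x,\mathfrak{t}(-1)] = 0\} = \mathfrak{t}(-1).
\]
This is a standard structural property of Cartan subspaces in infinitesimal symmetric spaces, extractable from the Kostant--Rallis theory \cite{K-R}. To verify it, one uses that the Jordan decomposition is $\theta$-equivariant: any $x$ centralizing $\mathfrak{t}(-1)$ decomposes as $x = x_s + x_n$ with both summands in $\mathfrak{g}(-1)$ and both centralizing $\mathfrak{t}(-1)$. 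Then $\mathfrak{t}(-1) + F x_s$ is a commutative subspace of semisimple elements, so by maximality $x_s \in \mathfrak{t}(-1)$; and the nilpotent part $x_n$ must vanish, since the centralizer in $\mathfrak{g}(-1)$ of a relatively regular semisimple element of $\mathfrak{t}(-1)$ reduces to $\mathfrak{t}(-1)$, and $\mathfrak{t}(-1)$ contains no nonzero nilpotents.

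Granted the self-centralizing property, the remainder of the argument is formal. Centralizers are cut out by $F$-linear equations, so they commute with base change:
\[
\mathfrak{z}_{\mathfrak{g}(-1)_E}(\mathfrak{t}(-1)_E) = \mathfrak{z}_{\mathfrak{g}(-1)}(\mathfrak{t}(-1))_E = \mathfrak{t}(-1)_E.
\]
Any commutative subspace of $\mathfrak{g}(-1)_E$ containing $\mathfrak{t}(-1)_E$ lies in this centralizer, hence equals $\mathfrak{t}(-1)_E$. This yields maximality of $\mathfrak{t}(-1)_E$ even among arbitrary commutative subspaces, and \emph{a fortiori} among those consisting of semisimple elements. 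The only nontrivial step in the plan is the self-centralizing identity; once it is invoked, the lemma is an immediate consequence of the behavior of commutativity, semisimplicity, and centralizers under base change.
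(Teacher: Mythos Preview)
Your route is different from the paper's. The paper passes to the group: $\mathfrak{t}(-1)$ is the Lie algebra of a maximal $\theta$-split torus $T(-1)\leq G$, and \cite[Lemma~11.1]{Helminck:Wang} says maximal $\theta$-split tori remain maximal after any field extension, whence $\mathfrak{t}(-1)_E$ is a Cartan subspace of $\mathfrak{g}(-1)_E$. You instead stay on the Lie algebra side and argue via the self-centralizing identity $\mathfrak{z}_{\mathfrak{g}(-1)}(\mathfrak{t}(-1))=\mathfrak{t}(-1)$, which, once established over $F$, transports to $E$ because centralizers are cut out by linear equations. That is a legitimate and more self-contained strategy.

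The gap is in your justification of the self-centralizing identity. To kill the nilpotent part $x_n$ you appeal to a ``relatively regular semisimple element of $\mathfrak{t}(-1)$'' whose centralizer in $\mathfrak{g}(-1)$ equals $\mathfrak{t}(-1)$. But both the existence of such an element in $\mathfrak{t}(-1)$ and the equality $\dim \mathfrak{z}_{\mathfrak{g}(-1)}(t)=\dim\mathfrak{t}(-1)$ amount to knowing $\dim\mathfrak{t}(-1)=\mathrm{rank}\,\theta$, i.e.\ that $\mathfrak{t}(-1)_{\overline F}$ is already a Cartan subspace so that Kostant--Rallis applies---precisely the statement of the lemma for $E=\overline{F}$. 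A non-circular repair: if $x_n\neq 0$, complete it inside the reductive $\theta$-stable subalgebra $\mathfrak{m}:=\mathfrak{z}_{\mathfrak{g}}(\mathfrak{t}(-1))$ to a normal triple $(x_n,f,h)$ with $f\in\mathfrak{m}(-1)$, $h\in\mathfrak{m}(1)$, exactly as in Lemma~\ref{lem:enough}. Then $x_n-f\in\mathfrak{g}(-1)$ is nonzero and semisimple (it is the image of $\left(\begin{smallmatrix}0&1\\-1&0\end{smallmatrix}\right)\in\mathfrak{sl}_2$) and commutes with $\mathfrak{t}(-1)$, so maximality forces $x_n-f\in\mathfrak{t}(-1)$; but $[h,x_n-f]=2(x_n+f)\neq 0$ contradicts $\mathfrak{t}(-1)\subseteq Z(\mathfrak{m})$. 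With this fix your argument goes through.
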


\begin{proof}
Let $T(-1) \subseteq G$ be the connected subgroup whose Lie algebra is $\mathfrak{t}(-1)$.  Then $T(-1)$ is a maximal $\theta$-split torus in $G$.  Its base change to $E$ is therefore a maximal $\theta$-split torus in $G_E$ by \cite[Lemma 11.1]{Helminck:Wang}, and it follows that $\mathfrak{t}(-1)_E$ is a Cartan subspace of $\mathfrak{g}(-1)_E$.\end{proof}

Assume that $G$ is one of the groups among $\mathrm{GL}_{p+q}$, $\mathrm{O}(J_{p,q})$ or $\mathrm{Sp}(J_{p,q}')$ and $\theta$ is the conjugation by $I_{p,q}$. In view of Lemma \ref{lem:independence} the rank of $\theta$ in each of these cases is equal to the rank of the associated locally symmetric space, which can be found in \cite[Table V, \S X.6]{Helgason}:
\begin{enumerate}
\item[$\mathbf{AIII}$] For $G=\mathrm{GL}_{p+q}$, $\mathrm{rank}\,\theta=\min(p,q)$.
\item[$\mathbf{BDI}$] For $G=\mathrm{O}(J_{p,q})$,  $\mathrm{rank}\,\theta=\min(p,q)$.
\item[$\mathbf{CI}$] For $G=\mathrm{Sp}(J'_{p,q})$,  $\mathrm{rank}\,\theta=\frac{1}{2}\min(p,q)$.
\end{enumerate}
In particular, note that for $G=\mathrm{O}(J_{p,q})$ with $|p-q| \leq 1$ one has
$$
\mathrm{rank}\,\theta=\mathrm{rank}\,G,
$$
which is equivalent to saying that $\theta$ is a \textbf{stable involution} in the sense of \cite{Thorne}.

The relationship between the rank and regularity is stated in the following lemma.  It is a combination of  \cite[Lemma 2 and Proposition 8]{K-R}.

\begin{lem} \label{lem:rank}
An element $X \in \mathfrak{g}(-1)(F)$ is relatively regular if and only if 
$$
\mathrm{rank} \, \theta=\dim_F \mathfrak{g}(-1)^X.
$$\qed
\end{lem}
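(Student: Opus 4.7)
The plan is to reformulate relative regularity as a statement about $\dim \mathfrak{g}(-1)^X$ via an orthogonality argument, and then quote the Kostant-Rallis structure theory of Cartan subspaces to pin down the minimal value. First, by the orbit-stabilizer theorem applied over $\overline F$ (stabilizers are smooth in characteristic zero), an element $X \in \mathfrak{g}(-1)(F)$ is relatively regular precisely when $\dim \mathfrak{g}(1)^X$ achieves its minimum value over $\mathfrak{g}(-1)(\overline F)$. So the lemma reduces to showing that (i) the two quantities $\dim \mathfrak{g}(1)^X$ and $\dim \mathfrak{g}(-1)^X$ attain their minima on the same set of $X$'s, and (ii) the minimum of $\dim \mathfrak{g}(-1)^X$ equals $\mathrm{rank}\,\theta$.

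For (i), I would fix a non-degenerate, $\mathrm{ad}$-invariant, $\theta$-invariant symmetric bilinear form $B$ on $\mathfrak{g}$. In all three families the trace form $(X,Y)\mapsto \mathrm{tr}(XY)$ on the ambient matrix algebra restricts to such a form. Because $\theta$ is an involution and $B$ is $\theta$-invariant, the subspaces $\mathfrak{g}(1)$ and $\mathfrak{g}(-1)$ are $B$-orthogonal and $B$ is non-degenerate on each. The identity $B([X,Y],Z) = -B(Y,[X,Z])$ then shows that the two maps $\mathrm{ad}_X \colon \mathfrak{g}(1)\to\mathfrak{g}(-1)$ and $\mathrm{ad}_X \colon \mathfrak{g}(-1)\to\mathfrak{g}(1)$ are transposes up to sign, hence have equal rank. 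Rank-nullity gives
$$
\dim \mathfrak{g}(-1)^X - \dim \mathfrak{g}(1)^X \;=\; \dim \mathfrak{g}(-1) - \dim \mathfrak{g}(1),
$$
independent of $X \in \mathfrak{g}(-1)$, so the two dimensions are co-minimized.

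For (ii), fix a Cartan subspace $\mathfrak{t}(-1) \subseteq \mathfrak{g}(-1)(\overline F)$ and choose a generic $X_0 \in \mathfrak{t}(-1)$ on which no non-zero restricted root vanishes. The restricted root-space decomposition of $\mathfrak{g}(-1)$ under $\mathfrak{t}(-1)$ forces $\mathfrak{g}(-1)^{X_0} = \mathfrak{t}(-1)$, so $\dim \mathfrak{g}(-1)^{X_0} = \mathrm{rank}\,\theta$. I would then invoke \cite[Lemma 2 and Proposition 8]{K-R}, which guarantee that this is the minimum possible value of $\dim \mathfrak{g}(-1)^X$, completing the biconditional.

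The main obstacle is really translational: Kostant-Rallis work over $\mathbb{C}$, whereas the lemma is stated for $F$-points over an arbitrary characteristic-zero field. This is not a serious obstruction, however, since by Lemma \ref{lem:independence} the rank of $\theta$ is invariant under base change, and the dimension of a Lie-algebra centralizer can be computed after extending scalars to $\overline F$. Thus it suffices to verify the equality after base change to $\overline F$ and ultimately $\mathbb{C}$, exactly as in the reduction carried out in the proof of Theorem \ref{thm:KR}.
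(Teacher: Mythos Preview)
Your proposal is correct and is in fact more detailed than what the paper does: the paper gives no proof at all, simply attributing the lemma to \cite[Lemma~2 and Proposition~8]{K-R} and marking it with a \verb|\qed|. Your argument unpacks precisely why that citation suffices, supplying the bilinear-form duality that links $\dim\mathfrak{g}(1)^X$ to $\dim\mathfrak{g}(-1)^X$ and the base-change remark that justifies importing the Kostant--Rallis statements from $\mathbb{C}$ to an arbitrary characteristic-zero field; both steps are sound and the citation you end with is the same one the paper invokes.
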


\section{Computations in Symmetric Spaces} \label{sec:comp}

In view of Lemma \ref{lem:enough} to prove Theorem \ref{thm:main} it suffices to construct a relatively regular nilpotent element in $\mathfrak{g}(-1)(F)$ for the cases specified in the introduction.   This is the goal of the ongoing section.

Without loss of generality, assume $p\geq q$ from now on. For presentation convenience, we let $\epsilon_n\in\mathfrak{gl}_n$ be a regular nilpotent element of the form
$$ \epsilon_n=\left(\scalemath{0.7}{\begin{array}{*{10}{c}}
 & \mathbbm{1}_{n-1}\\
0 & \\
\end{array}} \right),
$$
let the element $\lambda_{m,n}\in\mathrm{M}_{m,n}$ be,
$$\lambda_{m,n}=\left(\scalemath{0.7}{\begin{array}{*{10}{c}}
 & \mathbf{0}_{m-1,n-1}\\
1 & \\
\end{array}} \right)$$
and let $K_n\in\mathfrak{gl}_n$ be the diagonal matrix 
$$K_n= \left(\scalemath{0.6}{\begin{array}{*{10}{c}} 
1 & && \\
& -1 & &\\
& & \ddots &\\
& & & (-1)^{n-1}
\end{array}} \right).$$

\subsection*{General Linear Groups} For $p \geq q$ and $n=p+q$, we let $G=\mathrm{GL}_n$, $\mathfrak{g}=\mathfrak{gl}_n$, and $\theta$ be as in the introduction. We write $\mathbf{0}_{m,n}$ for an $m\times n$ zero matrix and omit an index when $m=n$.

We will prove that
$$e = \left(\scalemath{0.9}{\begin{array}{ccc|c} 
& & & \mathbbm{1}_q \\
& & & \mathbf{0}_{p-q,q} \\
\hline
\mathbf{0}_{q,1} & \mathbbm{1}_q & \mathbf{0}_{q,p-q-1} &\\
\end{array}} \right)$$
is a relatively regular nilpotent element in $\mathfrak{g}(-1)$.
\begin{remark*}
In case of $p=q$, the nilpotent element is defined as $$e = \left(\scalemath{0.8}{\begin{array}{*{10}{c}} 
 &  \mathbbm{1}_q \\
\epsilon_q &  
\end{array}}  \right).$$
\end{remark*}
The nilpotency can be verified by direct computation. As mentioned in \S \ref{sec:orbits}, for $G=\mathrm{GL}_{p+q}$, we have
$$\mathrm{rank}\,\theta=q.$$
Thus, it suffices to prove the following
\begin{lem}
With the setting as above, we have
$$\dim\mathfrak{g}(-1)^e=q.$$
\end{lem}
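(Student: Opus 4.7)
The plan is to reduce the computation of $\mathfrak{g}(-1)^e$ to two block matrix equations. Since $\mathfrak{g}(-1)$ consists of block-off-diagonal matrices, I parameterize $X=\begin{pmatrix}0 & B\\ C & 0\end{pmatrix}$ with $B\in\mathrm{M}_{p,q}$, $C\in\mathrm{M}_{q,p}$, and write $e=\begin{pmatrix}0 & B_e\\ C_e & 0\end{pmatrix}$ accordingly. A direct expansion of $[X,e]$ shows that $[X,e]=0$ is equivalent to the pair of equations $BC_e=B_eC$ in $\mathrm{M}_{p,p}$ and $C_eB=CB_e$ in $\mathrm{M}_{q,q}$.

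Both equations are easy to interpret because $B_e$ and $C_e$ are, up to placement of zero rows and columns, identity matrices. Right-multiplication by $C_e$ inserts the columns of $B$ into positions $2,\ldots,q+1$ (with zeros elsewhere); left-multiplication by $B_e$ stacks $C$ above a zero block; and similarly for the second equation. Comparing the two sides of $BC_e=B_eC$ forces rows $q+1,\ldots,p$ of $B$ to vanish, columns $1$ and $q+2,\ldots,p$ of $C$ to vanish, and the top $q\times q$ block of $B$ to agree with the middle $q\times q$ block of $C$. The equation $C_eB=CB_e$ then provides a second formula for the first $q$ columns of $C$, expressing them in terms of rows $2,\ldots,q+1$ of $B$.

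Equating the two formulas for $C$ yields a Toeplitz-type relation $B_{i,k}=B_{i+1,k+1}$ together with the subdiagonal vanishing $B_{i+1,1}=0$. Combined with the previously-established vanishing of rows below the $(q+1)$-st, these constraints force the nonzero part of $B$ to lie in the upper-left $(q+1)\times q$ block, to vanish weakly below the main diagonal, and to be constant on each of the $q$ diagonals $i-k=0,-1,\ldots,-(q-1)$. This gives exactly $q$ free parameters for $B$, and $C$ is then fully determined by $B$, so $\dim\mathfrak{g}(-1)^e=q$.

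The main obstacle is purely notational: keeping the two matrix equations and the various block decompositions aligned long enough to extract the clean Toeplitz picture. Once that structure is visible the parameter count is immediate, and a sanity check on small cases such as $p=q=1$, $(p,q)=(2,1)$, and $p=q=2$ confirms the answer. The edge case $p=q$, where the $(q+1)$-st row of $B$ does not exist and $e$ has the slightly different form from the remark, is handled by the same Toeplitz-plus-vanishing analysis applied to the top $q\times q$ block of $B$ and produces the same count.
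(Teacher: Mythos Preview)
Your proof is correct and follows essentially the same route as the paper's: both parameterize $X\in\mathfrak{g}(-1)$ by its two off-diagonal blocks, reduce $[X,e]=0$ to the pair of block equations $BC_e=B_eC$ and $C_eB=CB_e$, and read off from these that the upper block is an upper-triangular Toeplitz matrix with $q$ free parameters while the lower block is determined by it. The only differences are notational (the paper names the blocks $A,B$ rather than $B,C$) and in presentation, and the paper likewise singles out the $p=q$ case in a remark.
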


\begin{proof}
Note that 
$$
\mathfrak{g}(-1)(F)=\left\{X=\left(\scalemath{0.7}{\begin{array}{*{10}{c}} & A \\ B &  \end{array}} \right)\Bigm|A \in \mathrm{M}_{p,q}(F) \textrm{ and }B \in \mathrm{M}_{q,p}(F)\right\}.
$$
The set $\mathfrak{g}(-1)^e$ is given by the identity
\begin{align}
\mathrm{ad}_eX & =0\label{eq:gl-nilpotent}
\end{align}
Hence, for any $X$ satisfying (\ref{eq:gl-nilpotent}), the respective $A$ and $B$ satisfy
\begin{align*}
\left(\scalemath{0.7}{\begin{array}{*{10}{c}} \mathbf{0}_{q,1} &\mathbbm{1}_q& \mathbf{0}_{q,p-q-1} \end{array}} \right) A & = B \left(\scalemath{0.7}{\begin{array}{*{10}{c}} \mathbbm{1}_q\\ \mathbf{0}_{p-1,q} \end{array}} \right),\text{ and}\\
\left(\scalemath{0.7}{\begin{array}{*{10}{c}} \mathbf{0}_{q,1}&\mathbbm{1}_q & \mathbf{0}_{q,p-q-1} \end{array}} \right) & = \left(\scalemath{0.7}{\begin{array}{*{10}{c}} \mathbbm{1}_q\\ \mathbf{0}_{p-q,q} \end{array}} \right) B.
\end{align*}
Further, we will denote $A_r$ to be the $r$-th row of A and $B_r$ as the $r$-th column of $B$. We then have
\begin{align*}
(\scalemath{0.7}{\begin{array}{*{10}{c}} \mathbf{0}_{q,1}&\mathbbm{1}_q&\mathbf{0}_{q,p-q-1} \end{array}} ) A & = \left(\scalemath{0.7}{\begin{array}{*{10}{c}} A_2\\ \vdots \\ A_{q+1} \end{array}} \right)\\
B (\scalemath{0.7}{\begin{array}{*{10}{c}} \mathbbm{1}_q\\ \mathbf{0}_{p-q,q} \end{array}} ) & = (\scalemath{0.6}{\begin{array}{*{10}{c}} B_1 & \cdots & B_q \end{array}} ).
\end{align*}
Thus, substituting we have $b_{i,j} = a_{i+1,j}$ for $1 \leq i,j \leq q$. Observe also
\begin{align*}
A (\scalemath{0.7}{\begin{array}{*{10}{c}} \mathbf{0}_{q,1}&\mathbbm{1}_q&\mathbf{0}_{q,p-q-1} \end{array}} ) & = (\scalemath{0.7}{\begin{array}{*{10}{c}} \mathbf{0}_{p,1} &A&\mathbf{0}_{p,p-q-1} \end{array}} )\\
(\scalemath{0.7}{\begin{array}{*{10}{c}} \mathbbm{1}_q\\ \mathbf{0}_{p-q,q} \end{array}} ) B & = (\scalemath{0.7}{\begin{array}{*{10}{c}} B\\ \mathbf{0}_{p-q,p} \end{array}} ).
\end{align*}
This implies that $b_{i,j} = 0$ for either $j=1$ or $q +1 < j \leq p$, and $a_{i,j} = 0$ for $q < i \leq p$. Otherwise, $b_{i,j} = a_{i,j-1}$.

Using the results we also conclude that $a_{i,j} = a_{i+1,j+1}$. The matrices $A$ and $B$ therefore must be of the form:\\
$$A = \left(\substack{\scalemath{0.6}{\begin{array}{*{10}{c}}
a_1 & a_2 & \cdots & a_q \\
~ & a_1 & \ddots & \vdots \\ 
~ & ~ & \ddots & a_2\\
~ & ~ & ~ & a_1\end{array}}\\
\centering\mathbf{0}_{p-q,q}
} \right)\qquad\text{and}\qquad
B = \left(\substack{\scalemath{0.7}{\begin{array}{*{10}{c}}
0&a_1 & a_2 & \cdots & a_q \\
~& 0 & a_1 & \ddots & \vdots \\ 
~&~ & \ddots  & \ddots & a_2\\
~&~ & ~ & 0 & a_1\end{array}}\mathbf{0}_{q,p-q-1}} \right).$$\\

Thus, $\mathfrak{g}(-1)^e$ has dimension $q$. Notice that in the special case of $p=q$, the matrix $B$ is obtained by eliminating the $(q+1)$-th column.
\end{proof}
\begin{cor}
The element $e\in\mathfrak{gl}_{p+q}(-1)(F)$ is nilpotent and relatively regular.
\end{cor}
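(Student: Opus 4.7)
The corollary packages two assertions: nilpotency of $e$ and its relative regularity. The plan is to handle them in turn; neither step is substantial in its own right, as the real work has already been carried out in the preceding lemma.

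For nilpotency, I would write $e$ in the off-diagonal block form $e = \left(\begin{array}{cc} 0 & A \\ B & 0\end{array}\right)$, where $A$ is the $p \times q$ block with $A = \left(\begin{array}{c}\mathbbm{1}_q \\ \mathbf{0}_{p-q,q}\end{array}\right)$ and $B$ is the $q \times p$ block $\left(\begin{array}{ccc} \mathbf{0}_{q,1} & \mathbbm{1}_q & \mathbf{0}_{q,p-q-1}\end{array}\right)$. Squaring gives a block-diagonal matrix with diagonal blocks $AB$ and $BA$. A direct multiplication shows that $BA = \epsilon_q$, the standard regular nilpotent of size $q$, while $AB$ is the $p \times p$ matrix whose only nonzero entries are $1$'s at the positions $(i,i+1)$ for $1 \le i \le q$. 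Both diagonal blocks are manifestly nilpotent, and in fact $(AB)^{q+1} = 0$ and $(BA)^q = 0$, so $e^{2(q+1)} = 0$.

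For relative regularity, I would appeal to Lemma \ref{lem:rank}, which reduces the claim to the numerical equality $\mathrm{rank}\,\theta = \dim_F \mathfrak{g}(-1)^e$. The right-hand side equals $q$ by the lemma just established, and the left-hand side equals $q$ by the type $\mathbf{AIII}$ entry in the rank table recorded at the beginning of \S\ref{sec:orbits}. These match, completing the proof.

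The main obstacle in this stretch of the argument was not the corollary itself but the preceding dimension computation, which required an explicit parametrization of the centralizer $\mathfrak{g}(-1)^e$ via the commutation relations with the blocks of $e$. Once that dimension count is in hand, the corollary is pure bookkeeping: nilpotency by block multiplication, and relative regularity by matching two numbers that happen both to equal $q$.
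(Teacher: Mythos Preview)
Your proposal is correct and follows the paper's own approach: nilpotency is left to ``direct computation'' in the paper (which you spell out via the block product $e^2=\mathrm{diag}(AB,BA)$), and relative regularity comes from matching $\dim\mathfrak{g}(-1)^e=q$ from the preceding lemma with $\mathrm{rank}\,\theta=q$ from the type $\mathbf{AIII}$ entry, via Lemma~\ref{lem:rank}. The only omission is that your explicit block description assumes $p>q$; the $p=q$ case (where $B=\epsilon_q$) is handled identically and even more simply.
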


\subsection*{Symplectic Groups}
Let $J'_{p,q}$ as defined in the introduction. Consider the symplectic group $G=\mathrm{Sp}(J'_{p,q})$ and its adjoint representation on $\mathfrak{g}=\mathfrak{sp}(J'_{p,q})$. The involution $\theta$ is defined by the conjugation by $I_{p,q}$, and we will focus on the action of $G(1)$ on $\mathfrak{g}(-1)$ as usual.

Throughout, let $r = \frac{p-q}{2}$.
If $p\neq q$, let $$e = \left(\scalemath{0.9}{\begin{array}{ccc|c} 
&&& \mathbf{0}_{r-1,q}\\
&&& \mathbbm{1}_q \\
&&& \mathbf{0}_{r+1,q} \\
\hline
\mathbf{0}_{p,r+1} & (-1)^r\mathbbm{1}_q & \mathbf{0}_{p,r-1} &
\end{array}} \right).$$
and if $p=q$, let
$$e = \left(\scalemath{0.8}{\begin{array}{*{10}{c}} 
 &  \epsilon_q  \\
\epsilon_q &  
\end{array}}  \right).$$\\

As mentioned in Section \ref{sec:orbits},
$$\mathrm{rank}\,\theta=\frac{q}{2}.$$
Furthermore, we have the following lemma. 
\begin{lem}
With the settings above, we have
$$\dim\mathfrak{g}(-1)^e=\frac{q}{2}.$$
\end{lem}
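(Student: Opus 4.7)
The plan is to imitate the computation from the general linear case and then feed in the additional constraint coming from the symplectic form; for concreteness I assume $p > q$ (so that $r \ge 1$), the case $p = q$ being entirely analogous. I parametrize $X \in \mathfrak{g}(-1)$ in block form as $X = \left(\begin{smallmatrix} 0 & A \\ B & 0 \end{smallmatrix}\right)$ with $A \in \mathrm{M}_{p,q}$ and $B \in \mathrm{M}_{q,p}$. The symplectic condition reduces to $B^t J'_q + J'_p A = 0$; since $p$ and $q$ are both even, $(J'_r)^{-1} = -J'_r$, and so $A$ is the sign-twisted anti-transpose $A_{ij} = -(-1)^{i+j} B_{q-j+1,\,p-i+1}$. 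In particular $B$ parametrizes $\mathfrak{g}(-1)$ freely.

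Imposing $[e,X]=0$ decomposes into the two block equations $A_e B = A B_e$ and $B_e A = B A_e$. Because $A_e$ is a copy of $\mathbbm{1}_q$ occupying rows $r,\ldots,r+q-1$ and $B_e$ is a copy of $(-1)^r\mathbbm{1}_q$ occupying columns $r+2,\ldots,r+q+1$, the first equation forces $A$ to be supported on rows $r,\ldots,r+q-1$, $B$ to be supported on columns $r+2,\ldots,r+q+1$, and yields the relation $A'=(-1)^r B'$ between the two $q\times q$ middle blocks $A'$ and $B'$. Substituting this into the second equation reduces it to the shift invariance
\[
B'_{\ell+2,\,m+2} \;=\; B'_{\ell, m}, \qquad 1 \le \ell, m \le q-2,
\]
together with the boundary vanishing $B'_{\ell, m}=0$ whenever $\ell\ge 3$ and $m\in\{1,2\}$.

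Now I plug the symplectic formula for $A$ back into $A'=(-1)^r B'$. A short computation, using the evenness of $q$, yields the involutive symmetry
\[
B'_{\ell, m} \;=\; (-1)^{\ell+m}\, B'_{q-m+1,\, q-\ell+1},
\]
so $B'$ is equivariant, with sign $(-1)^{\ell+m}$, under the involution $T:(\ell,m)\mapsto (q-m+1,\,q-\ell+1)$ of $\{1,\ldots,q\}^2$.

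It then remains to count. The shift relation partitions $\{1,\ldots,q\}^2$ into diagonals stepping by $(+2,+2)$, each carrying one free scalar. The boundary vanishing kills every diagonal whose starting point $(\ell_0,m_0)$ has $\ell_0\ge 3$, leaving exactly the $2q$ diagonals with $\ell_0\in\{1,2\}$. The symmetry $T$ commutes with the shift and descends to an involution on these $2q$ diagonals; a direct case analysis shows that $T$ fixes the $q$ diagonals starting at $(1,2j)$ or $(2,2j-1)$ for $j=1,\ldots,q/2$ (all with $\ell_0+m_0$ odd), on which the sign $(-1)^{\ell+m}$ is $-1$, forcing the diagonal to vanish. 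The remaining $q$ diagonals are partitioned by $T$ into $q/2$ orbits of the form $\{(1,2j-1),(2,2j)\}$, each contributing a single free scalar. This gives $\dim\mathfrak{g}(-1)^e = q/2$. The main obstacle is the bookkeeping in this final step: verifying the $T$-action on the $2q$ surviving diagonals and checking that the parity of $\ell_0+m_0$ perfectly separates the vanishing fixed diagonals from the surviving paired ones.
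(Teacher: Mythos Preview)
Your argument is correct and follows essentially the same route as the paper: extract the $q\times q$ block from the commutator equations, derive the shift-by-$2$ invariance, then impose the sign-twisted anti-transpose symmetry coming from the symplectic form and count; the paper carries this out by writing $A_q$ explicitly, whereas you phrase the final count combinatorially via the involution $T$ on step-$2$ diagonals, but the underlying computation is the same. One small omission: the second block equation $B_eA=BA_e$ also produces the companion boundary condition $B'_{\ell,m}=0$ for $\ell\in\{q-1,q\}$ and $m\le q-2$ (the paper records this as well), but this is redundant with the condition you stated, since every point it touches lies on a diagonal with $\ell-m\ge 1$ already killed by $B'_{\ell,m}=0$ for $\ell\ge 3,\ m\in\{1,2\}$; hence your final count of $q/2$ stands.
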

\begin{proof}
We have, by definition,
$$\mathfrak{g}(-1)^e = \{ X \in \mathfrak{g}(-1) \mid  \left[X,e\right]=0\text{, and }\theta(X)=-X \}.$$

We will begin with a general $X=\scalemath{0.6}{
\begin{pmatrix}
& A\\
B &
\end{pmatrix}}$ with $A\in\mathrm{M}_{p,q}$ and $B\in\mathrm{M}_{q,p}$.

Observe that by $\left[X,e\right]=0$, we have
\begin{align}
A(\scalemath{0.8}{\begin{array}{*{10}{c}} \mathbf{0}_{q,r+1}&(-1)^r \mathbbm{1}_q&\mathbf{0}_{q,r-1} \end{array}} ) & = (\scalemath{0.8}{\begin{array}{*{10}{c}} \mathbf{0}_{p,r+1}&(-1)^r A&\mathbf{0}_{p,r-1} \end{array}} )\notag\\
& = \left(\scalemath{0.7}{\begin{array}{*{10}{c}} \mathbf{0}_{r-1,p}\\B\\\mathbf{0}_{r+1,p} \end{array}} \right) = \left(\scalemath{0.7}{\begin{array}{*{10}{c}} \mathbf{0}_{r-1,q}\\ \mathbbm{1}_q\\\mathbf{0}_{r+1,q} \end{array}} \right) B\label{eq:sp centralizer}
\end{align}
Thus, by comparison we have the relations
\begin{align*}
a_{i,j} & = 0\quad\text{  if  }\quad 1 \leq i \leq r-1 \ \text{ or }\  q+r \leq i \leq p\\
b_{i,j} & = 0\quad\text{  if  }\quad 1 \leq j \leq r+1 \ \text{ or }\  q+r+2 \leq j \leq p.  
\end{align*}
For any yet unspecified $1 \leq i,j \leq p$, we have $b_{i,j} = (-1)^r a_{i,j}$. This determines $\mathfrak{g}(-1)^e$ to be of the form $$\left( \scalemath{0.8}{\begin{array}{ccc|c} 
&&& \mathbf{0}_{r-1,q}\\
&&& A_q \\
&&& \mathbf{0}_{r+1,q}\\
\hline
\mathbf{0}_{q,r+1} & (-1)^r A_q & \mathbf{0}_{q,r-1} &
\end{array}} \right),$$
where $A_q$ is a $q\times q$ matrix and we denote its entries by $x_{i,j}$ for $1 \leq i,j \leq q$.

Also, observe that (\ref{eq:sp centralizer}) also provides the relation $(-1)^r A_q N'_q = (-1)^r N'_q A_q$ for the $q\times q$ matrix $A_q$, where
$$N'_q = \left(\scalemath{0.6}{\begin{array}{*{10}{c}}
& \mathbbm{1}_{q-2}\\
\mathbf{0}_{2} &
\end{array}} \right).$$
Written explicitly, it spells $x_{i,j} = x_{i+2,j+2}$. Also, we let $x_{i,j} = 0$ if $i > q-2$ and $j \leq q-2$ or if $j > q-2$ and $i \leq q-2$.

Summarizing, we have $$ A_q = 
\left(\scalemath{0.7}{\begin{array}{*{10}{c}} 
x_1 & x_2 & \cdots & \cdots & x_q\\
x_0 & y_{1}& y_{2} &\ddots&\vdots\\ 
0 &y_{0}&\ddots&\ddots&\vdots\\
\vdots & \ddots &\ddots&x_1& x_2\\
0 & \cdots & 0 & x_0 & y_{1}\\
\end{array}} \right).$$

Finally, we apply the constraint that the centralizer lies in $\mathfrak{g}=\mathfrak{sp}(J'_{p,q})$, that is,
$$J'_{p,q}XJ'_{p,q} = X^t.$$
From this equation we know that when $i+j$ is even, we have $a_{q+1-i,q+1-j} = a_{j,i}$. Meanwhile, if $i+j$ is odd, we have $a_{q+1-i,q+1-j} = -a_{j,i}$. This determines the explicit form of $A_q$,

$$A_q = \left(\scalemath{0.6}{\begin{array}{*{10}{c}}
x_1&0&x_3&\cdots & 0 \\
~&x_1&\ddots &\ddots &\vdots\\
~&~&\ddots& \ddots &x_3\\
~&~&~&\ddots &0\\
~&~&~&~&x_1 \end{array}} \right).$$

It follows that the centralizing elements $g(-1)^e$ is a vector space of dimension $\frac{q}{2}$.
\end{proof}
\begin{cor}
The element $e\in\mathfrak{sp}(J'_{p,q})(-1)(F)$, constructed above (for the respective $p,q$), is nilpotent and relatively regular.
\end{cor}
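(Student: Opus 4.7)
The plan is to carry out two small verifications and then invoke the material already assembled. The first task is to check that $e$ actually lies in $\mathfrak{g}(-1)(F)=\mathfrak{sp}(J'_{p,q})(-1)(F)$, a fact which is tacitly used in the preceding lemma. The condition $\theta(e)=-e$ is immediate from the purely off-diagonal block shape $\bigl(\begin{smallmatrix} 0 & A\\ B & 0\end{smallmatrix}\bigr)$. The symplectic condition $e^{t}\,J'_{p,q}+J'_{p,q}\,e=0$ is a direct entry-by-entry comparison, and this is the one step where the factor $(-1)^{r}$ in the lower-left block of $e$ becomes essential: it compensates for the $(-1)^{r-1}$ appearing in the lower-left corner of $J'_{r}$, so that the two contributions cancel against the symmetry forced by $J'_{p}$ being paired against $J'_{q}$ on the opposite block.

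Once $e\in\mathfrak{g}(-1)(F)$ is established, nilpotency is a short block computation. Writing $e=\bigl(\begin{smallmatrix} 0 & A\\ B & 0\end{smallmatrix}\bigr)$ one obtains $e^{2}=\bigl(\begin{smallmatrix} AB & 0\\ 0 & BA\end{smallmatrix}\bigr)$, which is block diagonal. In the case $p=q$ both diagonal blocks equal $\epsilon_{q}^{2}$, which is visibly nilpotent. In the case $p>q$, the product $AB$ is a scaled shift operator with entries $(-1)^{r}$ on the $+2$-superdiagonal confined to a band of width $q$ (and likewise for $BA$), so a modest explicit power of $e$ vanishes.

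Finally, relative regularity is immediate from the lemma just proved, together with Lemma~\ref{lem:rank} and the rank computation from Section~\ref{sec:orbits}. Case CI there gives $\mathrm{rank}\,\theta=\tfrac{1}{2}\min(p,q)=q/2$, using $p\geq q$, while the preceding lemma gives $\dim_{F}\mathfrak{g}(-1)^{e}=q/2$. These two integers coincide, so Lemma~\ref{lem:rank} forces $e$ to be relatively regular. The only step that requires genuine care is the verification of the symplectic-membership condition; everything else is either routine block multiplication or a direct invocation of results already proved, so that step is where I would concentrate the bookkeeping before stringing the conclusion together.
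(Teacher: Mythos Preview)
Your proposal is correct and follows the same route as the paper: the corollary is stated there without proof, being an immediate consequence of the preceding lemma together with Lemma~\ref{lem:rank} and the rank value $\mathrm{rank}\,\theta=q/2$ recorded in \S\ref{sec:orbits}. You simply make explicit the two checks (membership in $\mathfrak{sp}(J'_{p,q})(-1)$ and nilpotency) that the paper leaves to the reader; one small slip is that your sign discussion refers to ``$J'_{r}$'' with $r=(p-q)/2$, whereas the relevant sign interaction is between $J'_{p}$ and $J'_{q}$, but the conclusion is unaffected.
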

\subsection*{Orthogonal Group} For $|p-q|\leq 1$, we let $G=\mathrm{O}(J_{p,q})$, and $\theta$ be as in the introduction. Throughout, without the loss of generality, assume $p \ge q$. As mentioned above, in this case $\mathrm{rank}\,\theta=\mathrm{rank}\, G = q$, so the involution $\theta$ is stable (in the sense of \cite{Thorne}).

Again, we construct explicit formulae for $e$, and demonstrate by computation that the centralizer of $e$ in $\mathfrak{g}(-1)$ has dimension $q$.

\noindent\textit{Case 1}

When $p=q+1$, our element is given by $$e = \left(\scalemath{0.7}{\begin{array}{cc|c} 
&  & \mathbbm{1}_q \\
&  & \mathbf{0}_{1,q} \\
\hline
\mathbf{0}_{q,1} & \mathbbm{1}_q
\end{array}}  \right).$$
\begin{lem} Under this setting,
$$\dim\mathfrak{g}(-1)^e=q.$$
\end{lem}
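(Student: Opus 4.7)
The plan is to follow the template established for the general linear and symplectic cases. First, write an arbitrary element $X\in\mathfrak{g}(-1)(F)$ in off-diagonal block form $X=\bigl(\begin{smallmatrix}0 & A\\ B & 0\end{smallmatrix}\bigr)$ with $A\in\mathrm{M}_{p,q}(F)$ and $B\in\mathrm{M}_{q,p}(F)$. A direct computation with the block decomposition of $J_{p,q}$ shows that $X\in\mathfrak{o}(J_{p,q})$ is equivalent to the single relation $B=J_q A^t J_p$, where $J_p=J_{q+1}$. Hence $X$ is determined by $A$ alone.

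Second, expand $[X,e]=0$ with $e=\bigl(\begin{smallmatrix}0 & A_0\\ B_0 & 0\end{smallmatrix}\bigr)$, $A_0=\bigl(\begin{smallmatrix}\mathbbm{1}_q\\ 0\end{smallmatrix}\bigr)$, and $B_0=\bigl(\begin{smallmatrix} 0 & \mathbbm{1}_q\end{smallmatrix}\bigr)$. This yields the pair of block identities $AB_0=A_0 B$ and $BA_0=B_0 A$. Arguing exactly as in the general linear case, the first identity forces the last row of $A$ and the first column of $B$ to vanish and gives $a_{i,k}=b_{i,k+1}$ for $1\le i,k\le q$, while the second gives $b_{i,j}=a_{i+1,j}$ for $1\le i,j\le q$. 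Substituting the second into the first yields the Toeplitz shift $a_{i,k}=a_{i+1,k+1}$ wherever both sides are defined. Combined with the vanishing of the last row, this propagates zeros diagonally into the top $q\times q$ block of $A$ and forces all of its strictly sub-diagonal entries to vanish. Writing $a_{r,s}=c_{r-s}$, we find $A$ depends on precisely the $q$ free parameters $c_0, c_{-1}, \ldots, c_{-(q-1)}$ corresponding to the main and super-diagonals.

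Finally, a consistency check confirms that the orthogonality relation $b_{i,j}=a_{q+2-j,\,q+1-i}$ agrees with the bracket relation $b_{i,j}=a_{i+1,j}$ on such $A$; in Toeplitz coordinates this reduces to the elementary identity $(q+2-j)-(q+1-i)=i+1-j$, so no new constraints appear. Conversely, every such upper-triangular Toeplitz $A$ yields an element of $\mathfrak{g}(-1)^e$, so $\dim_F\mathfrak{g}(-1)^e=q$, as required.

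The only real subtlety is bookkeeping: because $B$ is determined by $A$, the two bracket identities and the orthogonality relation all constrain the single parameter $A$, and one must verify that together they impose exactly the upper-triangular Toeplitz structure with vanishing last row, and nothing more. The elementary index identity above is precisely what rules out any hidden extra constraint.
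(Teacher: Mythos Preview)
Your proof is correct and follows essentially the same approach as the paper: the bracket condition $[X,e]=0$ is identical to the $p=q+1$ general linear case and yields the upper-triangular Toeplitz form for $A$ (and a shifted copy for $B$), after which one verifies that the orthogonality relation $B=J_qA^tJ_{q+1}$ imposes no further constraint. Your Toeplitz-index identity makes this last verification explicit; note only that the relation $b_{i,j}=a_{i+1,j}$ you invoke covers $1\le j\le q$, and the remaining column $j=p=q+1$ (where the bracket instead gives $b_{i,p}=a_{i,q}$) should be checked separately---it reduces to the same index identity $i-q=1-(q+1-i)$ and so goes through without change.
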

\begin{proof} 
A centralizing element $X=\scalemath{0.6}{
\begin{pmatrix}
& A\\
B &
\end{pmatrix}}$ with $A\in\mathrm{M}_{p,q}$ and $B\in\mathrm{M}_{q,p}$ of $e$ in $\mathfrak{g}(-1)^e$ satisfies
\begin{enumerate}
\item $\mathrm{ad}_eX=0$, and
\item $B=J_qA^tJ_{q+1}$.
\end{enumerate}
The first constrain is the same as worked in the general linear case. The matrices $A$ and $B$ must be of the form
$$A = \left(\scalemath{0.6}{\begin{array}{*{10}{c}}
a_1 & a_2 & \cdots & a_q \\
0 & a_1 & \ddots & \vdots \\ 
\vdots & 0 & \ddots & a_2\\
\vdots & \vdots & \ddots & a_1\\
0 & 0 &  \cdots & 0 \end{array}} \right),\quad
B = \left(\scalemath{0.6}{\begin{array}{*{10}{c}}
0&a_1 & a_2 & \cdots & a_q  \\
0 & 0 & a_1 & \ddots & \vdots  \\ 
\vdots & \vdots & \ddots  & \ddots & a_2\\
0 &0  & \cdots & 0 & a_1  \end{array}} \right).$$

Indeed, $B= J_q A^T J_{q+1}$ and thus
$$\dim\mathfrak{g}(-1)^e=q$$
is valid.
\end{proof}

\noindent\textit{Case 2}

When $p=q$, the (to be verified) nilpotent element is
$$e= \begin{pmatrix}
~& e^*\\
J_q (e^*)^t J_q & ~\\
\end{pmatrix},$$ 

where $e^*$ is obtained by shifting the first $\ceil{q/2}$ entries of the identity $I_q$ to the right by one column (see below for the explicit form). For convenience, $J$ will refer to $J_q$ in the following computation.

If $p=q=2k$ is even, then
$$e^* = \left(\scalemath{0.5}{\begin{array}{*{10}{c}}
0 & 1 & ~  \\
\vdots & & \ddots \\
0 & & & 1 \\
0 & & & 1 & \\
\vdots &  & & & \ddots \\  
0 &  & & & & 1 
\end{array}}  \right) =\begin{pmatrix}
\epsilon_k&\lambda_{k,k}\\
\mathbf{0}_{k}&\mathbbm{1}_k\end{pmatrix}.$$
where $\epsilon_k = \left(\scalemath{0.7}{\begin{array}{*{10}{c}}
 & I_{k-1}\\
0 & \\
\end{array}} \right)$ and $\lambda_{k,k} = \left(\scalemath{0.7}{\begin{array}{*{10}{c}}
 & \textbf{0}_{k-1}\\
1 & \\
\end{array}} \right)$, as defined before.
\begin{lem} Under this setting, we have
$$\dim\mathfrak{g}(-1)^e.$$
\end{lem}
\begin{proof} The centralizing elements are of the form $\left(\scalemath{0.7}{\begin{array}{*{10}{c}}
~&X\\
JX^tJ&~
\end{array}} \right)$. For detailed computation, write $X$ in the block form $X=\left(\scalemath{0.6}{\begin{array}{*{10}{c}}
A & B\\
C & D
\end{array}} \right)$ with $A,B,C,D \in \mathrm{M}_{k}(F)$. 

From the relation
$$eX=Xe$$
we obtain 
$$XJ(e^*)^t=e^*JX^t \qquad\textrm{ and }\qquad X^tJe^*=(e^*)^tJX.$$
Expanding blockwise yields $$\left(\scalemath{0.7}{\begin{array}{*{10}{c}}
\epsilon_k&\lambda_k\\
\mathbf{0}_k&I_k
\end{array}} \right)J\left(\scalemath{0.7}{\begin{array}{*{10}{c}}
A^t&C^t\\
B^t&D^t\\
\end{array}} \right)=\left(\scalemath{0.7}{\begin{array}{*{10}{c}}
A&B\\
C&D\\
\end{array}} \right) J\left(\scalemath{0.7}{\begin{array}{*{10}{c}}
\epsilon_k^t&\mathbf{0}_k\\
\lambda_k^t&I_k\\
\end{array}} \right)$$ and $$\left(\scalemath{0.7}{\begin{array}{*{10}{c}}
\epsilon_k^t&\mathbf{0}_k\\
\lambda_k^t&I_k\\
\end{array}} \right) J \left(\scalemath{0.7}{\begin{array}{*{10}{c}}
A&B\\
C&D\\
\end{array}} \right)=\left(\scalemath{0.7}{\begin{array}{*{10}{c}}
A^t&C^t\\
B^t&D^t\\
\end{array}} \right)J\left(\scalemath{0.7}{\begin{array}{*{10}{c}}
\epsilon_k&\lambda_k\\
\mathbf{0}_k&I_k\\
\end{array}} \right).$$\\
Solving the equalities entrywise, we obtain \begin{align*}
A & = \left(\scalemath{0.7}{\begin{array}{*{10}{c}}
0&a_1&\cdots&a_{k-1}\\
~&0&\ddots&\vdots\\
~&~&\ddots&a_1\\
~&~&~&0\\
\end{array}}  \right) ,& B & =
B_1+B_2+B_3,\\
C &=\textbf{0}_k ,& D &= \left(\scalemath{0.7}{\begin{array}{*{10}{c}}
a_1 &\hdots & a_{k-1}&a_k-a_{2k}\\
~& a_1 &\ddots&a_{k-1}\\
~&~&\ddots&\vdots\\
~&~&~&a_1
\end{array}} \right).
\end{align*}
The matrices $B_i$, with $i=1,2,3$, are defined as
$$
B_1=\scalemath{0.7}{\begin{pmatrix}
a_k & a_{k+1} & \cdots & a_{2k-2} & a_{2k-1}\\
 & a_k & \ddots & \ddots & a_{2k-2}\\
 & & \ddots & \ddots & \vdots\\
 & & & a_k & a_{k+1}\\
 & & & & a_{2k}
\end{pmatrix}}\qquad
B_2=\scalemath{0.7}{\begin{pmatrix}
0 & & & & \\
a_{k-1} & \ddots & & & \\
\vdots & \ddots & \ddots & & \\
a_2 & a_3 & \ddots & \ddots &\\
a_1 & a_2 & \hdots & a_{k-1} & 0
\end{pmatrix}}\qquad
B_3=\scalemath{0.7}{\begin{pmatrix}
0 & & & & \\
a_{k-1} & \ddots & & & \\
\vdots & \ddots & \ddots & & \\
a_2 & \cdots & a_{k-1} & 0 &\\
0 & 0 & \hdots & 0 & 0
\end{pmatrix}}.
$$

Therefore, counting the number of independent variables, the centralizing elements $\mathfrak{g}(-1)^e$ forms a vector subspace of dimension $2k$.
\end{proof}

If $p=q=2k+1$ is odd, we define 

$$e^* = \left(\scalemath{0.5}{\begin{array}{*{10}{c}}
0 & 1 &  \\
\vdots & & \ddots \\
0 & & & 1 \\
0 & & & & 1 \\
0 & & & & 1 & \\
\vdots & & & & & \ddots \\  
0 &  & & & & & 1 
\end{array}} \right)=\begin{pmatrix}
\epsilon_{k+1}&\lambda_{k+1,k}\\
0_{k,k+1}&I_k\end{pmatrix}$$
which occurs in the definition of $e$.

The computation in this case is analogous to the previous case.
\begin{lem} Under this setting, we have
$$\dim\mathfrak{g}(-1)^e.$$
\end{lem} 
\begin{proof} Similar to the previous proof, assume that a centralizing element is of the form $\left(\scalemath{0.6}{\begin{array}{*{10}{c}}
~&X\\
JX^tJ&~
\end{array}} \right)$ where $X$ can be written as $\left(\scalemath{0.6}{\begin{array}{*{10}{c}}
A&B\\
C&D\\
\end{array}} \right)$ with $A \in \mathrm{M}_{k+1}(F)$, $B \in \mathrm{M}_{k+1,k}(F)$, $C \in \mathrm{M}_{k,k+1}(F)$, and $D \in \mathrm{M}_{k}(F)$.

The equality $\left(\scalemath{0.6}{\begin{array}{*{10}{c}}
&X\\
JX^tJ&\\
\end{array}} \right)
\left(\scalemath{0.6}{\begin{array}{*{10}{c}}
&e^*\\
J(e^*)^tJ&\\
\end{array}} \right)=
\left(\scalemath{0.6}{\begin{array}{*{10}{c}}
&e^*\\
J(e^*)^tJ&\\
\end{array}} \right)
\left(\scalemath{0.6}{\begin{array}{*{10}{c}}
&X\\
JX^tJ&\\
\end{array}} \right)$ reduces to
$$XJ(e^*)^t=e^*JX^t \quad\textrm{ and }\quad X^tJe^*=(e^*)^tJX.$$ 

Writing these equalities blockwise yields
\begin{align*}\left(\scalemath{0.7}{\begin{array}{*{10}{c}}
\epsilon_{k+1}&\lambda_{k+1,k}\\
\mathbf{0}_{k,k+1}&\mathbbm{1}_{k}
\end{array}} \right) J \left(\scalemath{0.7}{\begin{array}{*{10}{c}}
A^t&C^t\\
B^t&D^t\\
\end{array}} \right) & =\left(\scalemath{0.7}{\begin{array}{*{10}{c}}
A&B\\
C&D\\
\end{array}} \right) J \left(\scalemath{0.7}{\begin{array}{*{10}{c}}
\epsilon_{k+1}^t&\mathbf{0}_{k+1,k}\\
\lambda_{k,k+1}^t&\mathbbm{1}_{k}\\
\end{array}} \right)\text{, and}\\
\left(\scalemath{0.7}{\begin{array}{*{10}{c}}
\epsilon_{k+1}^t&\mathbf{0}_{k+1,k}\\
\lambda_{k,k+1}^t&\mathbbm{1}_{k}\\
\end{array}} \right) J \left(\scalemath{0.7}{\begin{array}{*{10}{c}}
A&B\\
C&D\\
\end{array}} \right) & =\left(\scalemath{0.7}{\begin{array}{*{10}{c}}
A^t&C^t\\
B^t&D^t\\
\end{array}} \right)J\left(\scalemath{0.7}{\begin{array}{*{10}{c}}
\epsilon_{k+1}&\lambda_{k+1,k}\\
\mathbf{0}_{k,k+1}&\mathbbm{1}_{k}
\end{array}} \right).
\end{align*}
Solving the equalities entrywise, we obtain \begin{align*}
A &= \left(\scalemath{0.7}{\begin{array}{*{10}{c}}
0&a_1&a_2&\hdots&a_k\\
~&0&a_1&\ddots&a_{k-1}\\
~&~&\ddots&\ddots &\vdots\\
~&~&~&0&a_1\\
~&~&~&~&0\\
\end{array}} \right),& B &=B_1+B_2+B_3,\\
C&=\textbf{0}_{k,k+1},& D &=\left(\scalemath{0.7}{\begin{array}{*{10}{c}}
a_1&\hdots&a_{k-1}&a_k\\
~&a_1&\ddots&a_{k-1}\\
~&~&\ddots&\ddots\\
~&~&~&a_1\\
\end{array}} \right),
\end{align*}
whereas
$$
B_1=\scalemath{0.6}{\begin{pmatrix}
a_{k+1} & a_{k+2} & \cdots & a_{2k-1} & a_{2k}\\
0 & a_{k+1} & \ddots & \ddots & a_{2k-1}\\
 & \ddots & \ddots & \ddots & \vdots\\
 & & \ddots & a_{k+1} & a_{k+2}\\
 & & & 0 & a_{2k+1}\\
 & & & & -a_{2k+1}
\end{pmatrix}}\qquad
B_2=\scalemath{0.6}{\begin{pmatrix}
0 & & & & \\
a_k & \ddots & & & \\
\vdots & \ddots & \ddots & & \\
a_3 & a_4 & \ddots & \ddots &\\
a_2 & a_3 & \hdots & a_k & 0\\
a_1 & a_2 & \hdots & a_{k-1} & a_{k+1}
\end{pmatrix}}\qquad
B_3=\scalemath{0.6}{\begin{pmatrix}
0 & & & & \\
a_{k-1} & \ddots & & & \\
\vdots & \ddots & \ddots & & \\
a_3 & \cdots & a_{k-1} & 0 & 0\\
 & & \mathbf{0}_{2,k} & & 
\end{pmatrix}}.
$$
We observe that $\dim\mathfrak{g}(-1)^e=2k+1$ as claimed.
\end{proof}
\begin{cor}
The element $e\in\mathfrak{o}(J_{p,q})(-1)(F)$ constructed above  (for the respective $p,q$), is nilpotent and relatively regular.
\end{cor}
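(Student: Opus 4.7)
The plan is to combine the dimension computations just completed with Lemma \ref{lem:rank}, together with a direct verification that each $e$ lies in $\mathfrak{g}(-1)(F)$ and is nilpotent. Conceptually, nothing new is needed beyond the three preceding lemmas; the corollary is a bookkeeping step, entirely parallel to the corresponding corollaries already proved for $\mathrm{GL}_{p+q}$ and $\mathrm{Sp}(J'_{p,q})$.

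First, I would verify that each displayed $e$ lies in $\mathfrak{g}(-1) = \mathfrak{o}(J_{p,q})(-1)$. All of the elements in this subsection are written in block anti-diagonal form with top-right block $A$ and bottom-left block $B$, so the eigenvalue condition $\theta(e) = I_{p,q} e I_{p,q}^{-1} = -e$ is automatic from the block shape. The Lie-algebra condition $J_{p,q} e^t J_{p,q}^{-1} = -e$ reduces to a relation of the form $B = J_q A^t J_p$ between the two off-diagonal blocks. In the cases $p = q$ this relation is built into the definition (the bottom-left block is literally $J_q(e^\ast)^t J_q$), and in the case $p = q+1$ it follows from a brief inspection of the explicit entries.

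Second, I would verify nilpotency by a direct computation. For a block anti-diagonal element, $e^2$ is block diagonal with diagonal blocks $AB$ and $BA$, so it suffices to observe that in each sub-case the explicit blocks are strictly upper triangular with small support. Their products are therefore nilpotent, and in fact a small fixed power of $e$ is zero.

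Finally, relative regularity is immediate from Lemma \ref{lem:rank}. The beginning of this subsection recorded $\mathrm{rank}\,\theta = q$ for $G = \mathrm{O}(J_{p,q})$ with $|p-q| \le 1$ and $p \ge q$, and the three preceding lemmas compute $\dim_F \mathfrak{g}(-1)^e$ to be $q$ (when $p = q+1$), $2k = q$ (when $p = q = 2k$), and $2k+1 = q$ (when $p = q = 2k+1$) respectively. Matching these against Lemma \ref{lem:rank} concludes the proof. If any step poses an obstacle at all, it is only the patient entrywise verification of the orthogonality condition in the odd sub-case $p = q = 2k+1$, where the middle block involves entries $\pm a_{2k+1}$ that must be reconciled carefully; but this is purely computational and presents no conceptual difficulty.
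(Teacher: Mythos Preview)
Your approach is correct and is exactly what the paper intends: the corollary is stated without proof because it follows immediately from the three preceding dimension computations together with Lemma \ref{lem:rank} and the recorded value $\mathrm{rank}\,\theta=q$, plus a routine check that $e\in\mathfrak{o}(J_{p,q})(-1)(F)$ and that $e$ is nilpotent.

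One small correction to your nilpotency sketch: in the $p=q$ sub-cases the off-diagonal blocks are \emph{not} strictly upper triangular (for instance $e^*$ contains an $I_k$ block), so the sentence ``the explicit blocks are strictly upper triangular with small support'' is not accurate as written. What is true is that the products $e^*\,J(e^*)^tJ$ and $J(e^*)^tJ\,e^*$ are block upper triangular with nilpotent diagonal blocks $\epsilon_k$ (or $\epsilon_{k+1}$), hence nilpotent; this is the computation you should actually record. The case $p=q+1$ is as you describe.
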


\section{Proof of Corollary \ref{cor:main}} \label{sec:cor}

We place ourselves in the situation of Corollary \ref{cor:main}.   Assume first that $G=\mathrm{GL}_{p+q}$.  In this case the rank of a matrix in $\mathrm{M}_{p,q}(\bar{F})$ is invariant under the $G(1)=\mathrm{GL}_p(\bar{F}) \times \mathrm{GL}_q(\bar{F})$-action, and all matrices of a given rank are in the same orbit.  Thus the corollary is trivial in this case.  We can therefore assume $G=\mathrm{O}(J_{p,q})$ or $G=\mathrm{Sp}(J_{p,q}')$, where in the orthogonal case we assume in addition that $\abs{p-q}\leq 1$.

One has a linear isomorphism $\mathfrak{g}(-1) \simeq \mathrm{M}_{p,q}$ given on points in an $F$-algebra $R$ by 
\begin{align*}
\mathfrak{g}(-1)(R) &\longrightarrow \mathrm{M}_{p,q}(R)\\
\left(\scalemath{0.7}{\begin{array}{*{10}{c}} & X\\
Y & \end{array}}  \right) &\longmapsto X
\end{align*}
Note that for any element in $\mathfrak{g}(-1)(R)$, $Y$ is determined by its counterpart $X$ via the relation
$$Y=J_qX^tJ_p$$
when $G=\mathrm{O}(J_{p,q})$; and
$$Y=-J'_qX^tJ'_p$$
when $G=\mathrm{Sp}(J'_{p,q})$. This proves the map is an isomorphism.
It is $G(1)$-equivariant with respect to the conjugation action on the left hand side and the the action explained before the statement of the corollary on the right hand side. Thus an orbit in $\mathrm{M}_{p,q}$ is regular if and only if its preimage is regular.  The corollary follows from Theorem \ref{thm:main}.
\qed

\bibliography{refs}{}
\bibliographystyle{alpha}

\end{document}